\newtheorem{theorem}{Theorem}[section]
\newtheorem{proposition}[theorem]{Proposition}
\newtheorem{corollary}[theorem]{Corollary}
\newtheorem{definition}[theorem]{Definition}
\newtheorem{assumption}[theorem]{Assumption}
\newtheorem{lemma}[theorem]{Lemma}
\newcommand{\Fps}{{F'}^*}
\newcommand{\T}{\nabla J}
\newcommand{\xs}{x^*}
\newcommand{\Bx}{B_\rho(\xs)}
\newcommand{\half}{\frac{1}{2}}
\newcommand{\R}{\mathbb{R}}
\newcommand{\N}{\mathbb{N}}
\newcommand{\esub}[1]{e_{k_#1}}
\newcommand{\xd}{x^{\delta}}
\newcommand{\ed}{e^{\delta}}
\newcommand{\yd}{y^\delta}
\newcommand{\Jd}{{J^\delta}} 
\newcommand{\Td}{\nabla J^\delta}
\newcommand{\Hd}{H^\delta}
\newcommand{\scl}{\langle} 
\newcommand{\scr}{\rangle}
\newcommand{\Cm}{\theta}
\newcommand{\myeta}{\kappa}
\newcommand{\co}{c_0}
\newcommand{\Nd}{N_{\delta}}
\newcommand{\xdl}{x^{\delta_l}}
\newcommand{\Ndl}{N_{\delta_l}}
\newcommand{\Jls}{J_{LS}}
\newcommand{\Ld}{L_\delta}
\newcommand{\FL}{\phi}
\newcommand{\fack}{\xi}
\newcommand{\Nc}[1]{{\rm NC(}#1{\rm)}}
\newcommand{\skipp}[1]{}
\title{Convergence of the gradient method for ill-posed problems} 
\author{Stefan Kindermann}
\date{}
\begin{document}
\maketitle 
\begin{abstract}
We study the convergence of the gradient descent method for solving 
 ill-posed problems where the solution is characterized
as  a global minimum of a differentiable functional in a Hilbert space. 
The classical least-squares functional for nonlinear operator equations is 
a special instance of this framework and the gradient method then reduces to 
Landweber iteration. The main result of this article is a  
proof of weak and strong convergence under new nonlinearity conditions that
generalize the classical tangential cone conditions. 
\end{abstract}

\section{Introduction}
A widely-used approach for dealing with a nonlinear ill-posed problem is to phrase 
it as an operator equation in Banach- or Hilbert spaces and apply  an iterative 
regularization method for its solution \cite{EHN96}. The simplest, though not the fastest,
amongst them is the  Landweber iteration, which can be viewed as a gradient descent
method for the associated least-squares functional. A well-known 
convergence theory has been established for Landweber iteration for nonlinear 
ill-posed problems based on the seminal paper by Hanke, Neubauer, and Scherzer \cite{HaNeSc95}. 
The pivotal innovation that paved the way for the analysis is to include appropriate 
restrictions on the ``nonlinearity'' of the problem by imposing so-called nonlinearity 
conditions on the underlying operators. 


Such conditions have been verified for several important nonlinear ill-posed problems,
e.g., for parameter identification in partial differential equations using interior measurements; see, e.g.,  \cite{KaNeSc08}.
However, 
and this is the crucial point, they have not yet been verified for certain well-studied 
problems like the electrical impedance tomography (aka.~Calder{\'o}n's  problem) \cite{Cal80}---although 
Landweber and  other iterative methods have been successfully applied to them. 
 This might give a hint that the traditionally used nonlinearity conditions are too strong 
to be satisfied for certain applications, and one may try to replaced them by weaker assumptions.

The main goal of this paper is to prove (local) weak and strong convergence of a subsequence 
of the gradient descent iterates for a functional with  Lipschitz-continuous gradient imposing 
more general nonlinearity conditions than the usual ones. 

Reviewing such typically-used restrictions reveals  
that the most common ones  
are the weak and strong form of the tangential cone conditions 
\cite{HaNeSc95,Sch95,Va98}. Stronger than those are the range-invariant conditions \cite{HaNeSc95,Ne00}.
Conceptually similar to this is the approach via Hilbert scales \cite{Ne00,Ne15}.
A typical functional-based nonlinearity condition is the assumption that the underlying
functional is locally a convex one, or equivalently formulated as the gradient being 
monotone \cite{BaKuSm11,BaGo94,Ra05,HoRa09,HoRa10} or strongly monotone \cite{PePiSe06,BaSm06}. 
Except for strong monotonicity, the assumption of a monotone gradient is not enough to 
prove strong convergence for the classical Landweber iteration. (Note that in  \cite{Ra05,HoRa09,HoRa10}
a continuous version was investigated while in  \cite{BaSm06,BaKuSm11,BaGo94} a modified (i.e., regularized) 
Landweber iteration was 
considered.) An insightful comparison of tangential cone conditions and several versions of monotonicity
of the gradient can be found in  \cite{Sch95}.

We note that such conditions are also used for proving convergence of other iterative regularization
methods in the nonlinear case such as, e.g., Gauss-Newton-type iterations \cite{Ka97,BaSm06,BaSm07,Jin11},
the Levenberg-Marquardt scheme \cite{Ha97}, or Kaczmarz iterations \cite{HaLeSc07}.


One of the main contribution in this article is to   prove boundedness
and weak convergence of the gradient descent iterations essentially 
under a two-parametric nonlinearity condition, which generalizes
 and includes both the weak/strong tangential conditions and several convexity conditions as 
special cases. This is interesting insofar as the tangential cone conditions do not imply convexity 
of the associated least-squares functional, thus our analysis can be viewed as an attempt for a unification 
of the established nonlinearity restrictions.

We also prove strong convergence of the iterates 
under a novel restriction which requires the functional to be ``balanced'' around 
critical points. This can be seen as a generalization of the strong tangential cone condition.
All these results hold both for the exact data-case and the noisy data-case, 
where in the latter, we employ a simple a-priori parameter choice. 

 Our setup is phrased as that of  
the problem of finding minina of general ill-posed functional rather than in the form of  nonlinear operator equations,
but since, of course, one can apply the least-squares idea,  the classical Landweber 
iteration is a special instance of the gradient iteration studied here. 
Note that the Lipschitz-continuity of the gradient 
plays an essential role in our work, hence, certain Banach-space variants of Landweber iterations  (see, e.g., \cite{HeKa10,ScKaHo12})
in non-smooth spaces are not within the scope of this work.

Our paper is organized as follows: in Section~\ref{Sec:two} we define the gradient iteration, 
present the standard assumptions we use and the novel nonlinearity conditions we impose. We study them  in detail 
by relating them to the traditionally used ones. In Section~\ref{Sec:three}, we 
prove boundedness and weak convergence of the iterates to a stationary point for our setup  both in 
the case of exact and noisy data. In Section~\ref{Sec:four},  strong convergence of the
iteration is proven in a similar framework.

\section{Setup and nonlinearity conditions}\label{Sec:two}
We consider the problem of finding a solution of 
an ill-posed problem that is characterized as a global minimum of a certain 
functional. Throughout this paper, we denote by $\Bx$ a ball with 
center $\xs$ and radius $\rho$ in a Hilbert space. 
We assume given a 
Fr\'{e}chet-differentiable, nonnegative  functional 
\begin{equation}\label{fun}
J: \Bx \subset X \to \R^+, 
\end{equation}
where $X$ is a Hilbert space, 
and that a sought-for solution $\xs$ satisfies 
\begin{equation}\label{opc}
J(\xs) = 0.
\end{equation}
By the nonnegativity of $J$, $\xs$ is a global minimum and has to satisfy the first-order 
optimality condition 
\begin{equation}\label{firstorder}
\nabla J(\xs)  = 0. 
 \end{equation}
The most important instance of such a functional is the 
least-squares functional $\Jls(x)$ for a nonlinear operator equation  with given data $y$,
 \begin{equation}\label{main}
          F(x) = y, 
\end{equation}
which is defined as 
\begin{equation}\label{least squares}
\Jls(x) = \tfrac{1}{2} \|F(x) - y\|^2.
\end{equation}
In the setup of \eqref{fun}, we assumed that the given data are 
encoded somehow into the functional $J$. Similar to the 
least-squares case, we have to allow for inexact data as well, 
i.e., we have to consider a ``noisy'' version of $J$ that represents the 
actual measurements. 

Thus, we assume given a 
Fr\'{e}chet-differentiable, nonnegative  functional 
\begin{equation}\label{fund}
\Jd: \Bx \subset X \to \R^+, 
\end{equation}
where the actual iteration is based upon. 
In order to solve \eqref{opc} for $\xs$ with given noisy data,
a gradient iteration can be used. It is defined iteratively 
(as long as the iterates $\xd_k$ stay in $\Bx$) as 
\begin{equation}\label{lwnoisy}
\xd_{k+1} = \xd_k - \nabla \Jd(\xd_k), \qquad k = 0,\ldots, 
\end{equation}
starting with an initial guess $\xd_0 \in \Bx$.  
For the analysis, it is convenient to define the corresponding iteration with 
exact data as well,
\begin{equation}\label{lwexact}
x_{k+1} = x_k - \nabla J(x_k) , \qquad k = 0,\ldots, 
\end{equation}
starting with the same initial guess $x_0 \in \Bx$ as in \eqref{lwnoisy}.
In the least-squares case \eqref{least squares}, iteration \eqref{lwnoisy}, respectively \eqref{lwexact}, is
the classical Landweber iteration: 
\begin{equation}
\xd_{k+1} = \xd_k -  \Fps(\xd_k)\left(F(\xd_k) - \yd \right), \qquad k = 0,\ldots 
\end{equation}
Note that usually the gradient descent iterations use a stepsize parameter in front of 
the gradient term. We assume throughout a constant stepsize  parameter that is 
encoded into the functional $J$ such that it will  be set to $1$ throughout.  The only restriction 
on the stepsize comes from the assumptions that we impose on $J$ and $\Jd$.  
Essentially, we assume that $J$ and $\Jd$ are differentiable on $\Bx$ with 
Lipschitz-continuous derivative and Lipschitz constant smaller than $1$. Precisely, we postulate the following:

\begin{assumption}\label{Assmain}\ \\[-4mm] \nopagebreak
\begin{enumerate}
\item $X$ is a Hilbert space. 
\item There exists an $\xs$ (exact solution) which satisfies \eqref{opc} (and hence also \eqref{firstorder}). 
\item For some $\rho>0$,  $J$ and $\Jd$ are defined on $\Bx \subset X$ and are Fr\'{e}chet-differentiable there. 
\item\label{A2k}  For all $x \in \Bx,$  the gradient $\nabla J(x)$ 
is Lipschitz continuous with  Lipschitz constant  $ L   < 1 $:
\[ \|\nabla J(x_1+x_2) - \nabla J(x_1) \| \leq L \|x_2\|  < \|x_2\|, \qquad \forall x_1 , x_1+x_2\in \Bx. \]
\item The functional $\Jd$  satisfies 
\begin{equation}\label{five}
  \|\nabla \Jd(x) - \nabla J(x) \| \leq \delta  \qquad \forall x \in \Bx,    \qquad \mbox{and} 
\end{equation} 
\begin{equation}\label{Jfive}
  \left| \Jd(x) -  J(x) \right|  \leq \psi(\delta)  \qquad \forall x \in \Bx, \quad \lim_{\delta \to 0} \psi(\delta) = 0\, .
\end{equation} 
\item The gradient satisfies 
\begin{equation}\label{Fleq} \|\nabla \Jd(x)\|^2 \leq  \FL(\Jd(x)) \qquad \forall x \in \Bx \end{equation}
with some monotone positive continuous function $\FL$ with $\FL(0) = 0$.  
%
\end{enumerate}
\end{assumption} 
As the notation suggests, $\delta$ plays the role of the noise level. We note that 
\eqref{five} implies that $\nabla \Jd$ is Lipschitz continuous with Lipschitz constant $\Ld$
\begin{equation}\label{noise1} \Ld \leq L + \delta. \end{equation}

It is easy to observe that for the least-squares problem \eqref{least squares} and Landweber iteration,  these 
assumptions are satisfied if $F$ has a Lipschitz-continuous derivative and if the stepsize in Landweber iteration 
is chosen sufficiently small. The noise level $\delta$ according to \eqref{noise1} is then related to the usual one by
$\delta \geq \sup_{x \in \Bx} \|F'(x)\| \|\yd -y\|.$  Since for the least-squares case, we have 
$\nabla \Jd(x) = F'(x)^*(F(x) - \yd)$,  the inequality \eqref{Fleq} holds with $\FL(s) \sim s.$

\subsection{Nonlinearity conditions}
We now propose a two-parametric generalization  of the well-known weak tangential cone condition: 
\begin{definition}\label{thisdef}
For some $\gamma \in [0,\infty]$ and $\beta \in \R$, we say that \Nc{$\gamma$,$\beta$} is satisfied for $J$ 
if for all $x_1,x_2 \ in \Bx$ the following implication holds true:
\begin{equation}\label{weakgamma}
J(x_1) \leq \gamma J(x_2) \Rightarrow 
\scl \nabla J(x_2),x_2-x_1 \scr \geq - \beta \|\nabla J(x_2)\|^2. 
\end{equation} 
\end{definition}
Note that we allow $\gamma = 0$ and $\gamma = \infty$. In the later case, the premise in the implication is 
tautological, thus the conclusion has to hold for all $x_1,x_2 \in \Bx$, while for $\gamma = 0$, the 
conclusion has to hold only for $x_1$ at a global minimum.

It is easy to verify that the condition in Definition~\ref{thisdef} is the stronger the larger the $\gamma$ and 
the smaller the $\beta$ is:
\begin{align*} \text{ for } \gamma_1 \leq \gamma_2 : & \quad \text{\Nc{$\gamma_2$,$\beta$}} \Rightarrow  
\text{\Nc{$\gamma_1$,$\beta$}} \\
 \text{ for } \beta_1 \leq \beta_2 : & \quad  \text{\Nc{$\gamma$,$\beta_1$}} \Rightarrow  
\text{\Nc{$\gamma$,$\beta_2$}} \, .
\end{align*}
%

This condition can be compared to 
the weak tangential cone condition 
(or  $\xs$-quasi-uniform monotonicity  \cite{Sch95}) in the least-squares case:
there exists an $0 < \eta < 1$ such that  
\begin{equation}\label{wsc}
\scl F(x) - F(\xs) - F'(x)(x-\xs), F(x) - F(\xs) \scr  \leq \eta \|F(x) - F(\xs) \|^2  \quad  \forall x \in \Bx. 
\end{equation} 
or, equivalently, 
\begin{equation}\label{wscm}
\scl F'(x)(x-\xs), F(x) - F(\xs) \scr  \geq (1-\eta) \|F(x) - F(\xs) \|^2  \quad  \forall x \in \Bx. 
\end{equation} 
It is easy to see that for Fr\'{e}chet-differentiable $F$, \eqref{wsc} with $\eta \in (0,1)$ implies \eqref{weakgamma} with a 
{\em negative} $\beta$ and $\gamma = 0$ 
for the associated least-squares functional. 
It was shown  \cite{Sch95} that \eqref{wsc} with $\eta \in (0,1)$ implies weak convergence 
for the Landweber iteration with exact data.  
In \cite{Va98},  the condition \Nc{0,$\beta$} with $\beta <0$ was imposed and again weak convergence 
of the exact Landweber iteration was proven (and also strong convergence for a modified form of the iteration). 

We generalize these results insofar as we also verify weak convergence 
in the noisy case and more interesting, we prove that \eqref{weakgamma} with $\gamma=0$ 
and {\em any} $\beta \in \R$ (also positive ones! and in particular with \eqref{wsc} with $\eta = 1$) 
already implies weak convergence of a subsequence of the 
gradient iteration.

Strong convergence of gradient iterations require a stronger nonlinearity condition than the previous ones. 
For our  convergence analysis, we need  additionally to \eqref{weakgamma} the following, which we denote 
as  balancing condition.
\begin{definition}\label{gbal}
Let $\gamma\geq 0$. 
We say that the functional $J:\Bx \to \R^+$ is $\gamma$-balanced around $\xs$ if for some $\rho_0< \rho$ 
and any sequence $z_n$ with $\rho_0 \leq \|z_n\| \leq \rho$ there exists 
a $ \tau >0$ and a $n_0 \in \N$ such that 
%
\begin{equation}\label{fancy1} J(\xs - \tau z_n ) \leq  \gamma J(\xs+ z_n) \qquad \forall n \geq n_0\, . 
\end{equation}
\end{definition}
We will prove strong convergence of the gradient iteration under the condition that 
$J$ is $\gamma$-balanced and satisfies \Nc{$\gamma$,$\beta$} for some $\gamma \geq 0$ and
some $\beta \in \R$.

The condition in Definition~\ref{gbal} can sloppily be interpreted as the requirement that $J$ does not have extremely 
large values 
when evaluated at a mirror point around $\xs$.
Thus, the  functional should roughly behave in a similar 
way left and right at $\xs$  on a line through $\xs$.

It is easy to verify that if $J$ is convex on $\Bx$ and satisfies a symmetry condition 
\[ J(\xs -z) \leq C J(\xs +z) \qquad \forall z \in \Bx, \]
with a constant $C$, then \eqref{fancy1} holds.


Traditionally, strong convergence of  the Landweber iteration is verified under  the so-called (strong)
 tangential cone condition (or strong Scherzer condition) (see, e.g., \cite{HaNeSc95,EHN96,KaNeSc08,Sch95}): there exists  $0 < \eta < 1$ such that  
\begin{equation}\label{ssc}
\| F(x) - F(\tilde{x}) - F'(x)(x-\tilde{x}) \| \leq \eta \|F(x) - F(\tilde{x}) \|  \quad  \forall x,\tilde{x} \in \Bx. 
\end{equation} 
It is obvious that the strong tangential cone condition implies the weak one. There are several interesting 
conclusions that follow from \eqref{ssc}. For instance, the following useful estimate 
follows immediately from \eqref{ssc}:
\begin{equation}\label{furthermore}
\frac{1}{1+\eta} \|F'(x)(\tilde{x} -x)\| \leq \|F(\tilde{x}) - F(x) \| \leq  \frac{1}{1- \eta} \|F'(x)(\tilde{x} -x)\|  \quad  \forall x,\tilde{x} \in \Bx. 
\end{equation}

Instead of \eqref{ssc}  an even stronger condition, is sometimes imposed: it postulates 
the existence of a family of operators $R_x$ such that 
\[ F'(x)  = R_x F'(\xs)  \qquad \forall x \in \Bx \text{ and }  \|R_x - I\| \leq C \|x-\xs\|. \]
Locally, it follows that $R_x$ are invertible  operators, which allows to compare the derivatives at 
different points $x$. It follows that this condition implies \eqref{ssc} (possibly on a smaller ball). 
%

Let us briefly study the relations of conditions \Nc{$\gamma$,$\beta$} and Definition~\ref{gbal} with the both the tangential 
cone conditions and certain convexity conditions. At first we introduce a generalization 
of quasiconvexity:
\begin{definition}
Let $C$ be a convex set and  let $0 \leq \gamma \leq 1.$  
We say that the functional $f$ is $\gamma$-quasiconvex if 
for all $x,y \in C$ and $\lambda \in (0,1)$ we have 
\begin{equation}
f(x_1) \leq \gamma f(x_2) \Rightarrow f(\lambda x_1 + (1-\lambda)x_2)  \leq f(x_2).  
\end{equation}
\end{definition}
For $\gamma =1$, we encounter the traditional definition of quasiconvexity \cite{genco}, which might also be phrased as 
the condition 
 that the following inequality holds:
\begin{equation}
 f(\lambda x_1 + (1-\lambda)x_2)  \leq \max\{f(x_2),f(x_1)\}   \qquad \forall x_1,x_2 \in C, \lambda \in (0,1).
\end{equation}
For positive functionals $f$, the assumption of   $\gamma$-quasiconvexity is weaker 
than quasiconvexity, which itself is in any case weaker than convexity. In terms of level-sets,
it is easy to see that $f$ is quasiconvex if and only if all its lower level sets 
$\{f \leq \alpha\}$  are convex. 
We may  view $\gamma$-quasiconvexity as the condition that the 
convex hull of $\{f \leq \gamma \alpha\}$ does not intersect the complement of $\{f \leq \alpha\}$.

The following characterization of $\gamma$-quasiconvexity is useful:
\begin{proposition}
Let $f:X\to \R$ be  Fr\'{e}chet-differentiable on the open convex set $C$. 
Let $0 \leq \gamma \leq 1$. 
Then $f$ is $\gamma$-quasiconvex on $C$ if and only if \Nc{$\gamma$,$0$} holds. 
%
\end{proposition}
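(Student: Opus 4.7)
The plan is to handle the two implications separately. The direction $\gamma$-quasiconvex $\Rightarrow$ \Nc{$\gamma$,$0$} is a short first-order computation, while the converse needs a contradiction argument along a one-dimensional slice between two offending points.

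For the easy direction, I fix $x_1,x_2 \in C$ with $f(x_1) \leq \gamma f(x_2)$. $\gamma$-quasiconvexity yields $f(\lambda x_1 + (1-\lambda) x_2) \leq f(x_2)$ for every $\lambda \in (0,1)$. Subtracting $f(x_2)$, dividing by $\lambda$, and sending $\lambda \to 0^+$ using Fr\'echet-differentiability gives $\scl \nabla f(x_2), x_1-x_2 \scr \leq 0$, which is exactly \Nc{$\gamma$,$0$}.

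For the converse, I would argue by contradiction. Suppose \Nc{$\gamma$,$0$} holds but $f$ is not $\gamma$-quasiconvex: then there exist $x_1,x_2\in C$ with $f(x_1)\leq \gamma f(x_2)$ and some $\lambda_0\in(0,1)$ with $g(\lambda_0)>g(0)$, where $g(t):=f(y_t)$ and $y_t := (1-t)x_2 + t x_1$. Since $C$ is open and convex, $y_t\in C$ and $g$ is continuous on $[0,1]$, differentiable on $(0,1)$. I then set
\[ t_1 := \sup\{t\in[0,\lambda_0] : g(t)\leq g(0)\}. \]
This set is closed, nonempty, and excludes $\lambda_0$; so $t_1\in[0,\lambda_0)$, $g(t_1)=g(0)$, and $g(t)>g(0)$ for every $t\in(t_1,\lambda_0]$. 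For each such $t$, $f(y_t)>f(x_2)$ combined with $\gamma\geq 0$ yields $f(x_1)\leq \gamma f(x_2) \leq \gamma f(y_t)$, so \Nc{$\gamma$,$0$} applied to the pair $(x_1,y_t)$ gives $\scl \nabla f(y_t),\,y_t-x_1\scr \geq 0$. Since $y_t-x_1 = (1-t)(x_2-x_1)$ with $1-t>0$, this reduces to $g'(t)=\scl \nabla f(y_t),\, x_1-x_2\scr \leq 0$. Hence $g$ is non-increasing on $[t_1,\lambda_0]$, so $g(\lambda_0)\leq g(t_1)=g(0)$, contradicting $g(\lambda_0)>g(0)$.

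The main obstacle, in my view, is selecting the right test point along the slice: a naive application of \Nc{$\gamma$,$0$} at the global maximizer $t^*$ of $g$ only returns $g'(t^*)=0$, which is consistent with the condition and produces no contradiction. The trick is to apply the hypothesis at \emph{every} point of the super-level interval $(t_1,\lambda_0]$ and to anchor the resulting monotonicity at the boundary point $t_1$, where $g(t_1)=g(0)$; only this combination turns the non-strict inequality in \Nc{$\gamma$,$0$} into a sharp contradiction.
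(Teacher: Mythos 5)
Your proof is correct and takes essentially the same route as the paper: the identical first-order limit for the forward direction, and for the converse the same one-dimensional slice with the same extremal point $t_1$ (the paper's $\zeta^*$) of the sublevel set $\{g\leq g(0)\}$. The only cosmetic difference is that the paper uses the mean value theorem to produce a single $\xi$ with $G'(\xi)>0$ and contradicts \Nc{$\gamma$,$0$} at that one point, whereas you apply \Nc{$\gamma$,$0$} at every point of the superlevel interval to get $g'\leq 0$ and then integrate the monotonicity; both arguments rest on exactly the same key inequality, and yours even handles the boundary case $\gamma=0$ slightly more carefully by keeping the inequality non-strict.
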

\begin{proof}
We follow \cite{arrent}; cf.~\cite[Theorem~3.11]{genco}. If $f$ is $\gamma$-quasiconvex, then for 
any $x_1,x_2$ with $f(x_1) \leq \gamma f(x_2)$, we have 
$\frac{f(\lambda x_1 + (1-\lambda) x_2) -f(x_2)}{\lambda} \leq 0.$ The limit $\lambda \to 0$ implies that 
$\scl \nabla f(x_2), x_1 -x_2 \scr \leq 0$, hence  \Nc{$\gamma$,0} holds. Conversely, suppose that 
 \Nc{$\gamma$,$0$} holds, and define for  $x_1,x_2$  with $f(x_1) \leq \gamma f(x_2)$ the function 
$G(\lambda):= f(\lambda x_1 + (1-\lambda) x_2)$  on the unit interval $[0,1]$.  Suppose that 
$G(\lambda) > G(0)$ for some $\lambda \in [0,1).$ Consider the largest element $\zeta^*$ in the 
nonempty closed set $\{\zeta \in [0,\lambda] | G(\zeta) \leq G(0) \}.$ By construction $\zeta^* < \lambda$ and
it follows by the 
intermediate value theorem that there exists a $\xi \in (\zeta^*,\lambda)$ with $G'(\xi) >0$ and 
$G(\xi) >G(0)$ (since $\zeta^*$ is the largest element in the set).  
However, then with $x_\xi = \xi x_1 + (1-\xi) x_2$ we find that 
$f(x_1) = G(1) \leq \gamma f(x_2) = \gamma G(0) < \gamma f(x_\xi).$ Thus, \Nc{$\gamma$,0} 
implies that 
$$0 \leq \scl \nabla f(x_\xi),x_\xi - x_1 \scr = (1-\xi)\scl \nabla f(x_\xi),x_2 - x_1 \scr = -(1-\xi) G'(\xi),$$
which contradicts $G'(\xi) >0$. Hence, $G(\lambda) \leq  G(0)$ must hold, which implies $\gamma$-quasiconvexity. 
\end{proof}

It is interesting that the weak tangential cone condition can also be expressed by a derivative-free 
condition:
\begin{proposition}\label{thisprop}
Let $F$ be Fr\'{e}chet-differentiable. Then, 
condition \eqref{wsc} holds for some $\eta \in [0,1]$ if and only if the least-squares functional 
$J_{LS}(x):= \tfrac{1}{2} \|F(x) - F(\xs)\|^2$ has the property that the mapping 
\begin{equation}\label{map}
t \to  \frac{1}{t^{2(1-\eta)}}J_{LS}(\xs + t(x-\xs))
\end{equation}
is monotonically increasing for $t \in [0,1]$ for all $x \in \Bx$.
\end{proposition}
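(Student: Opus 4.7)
The plan is to reduce the monotonicity statement to a pointwise differential inequality and then recognize that inequality as the weak tangential cone condition evaluated along the segment joining $\xs$ to $x$.

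First, for fixed $x \in \Bx$, I would introduce $g(t) := \Jls(\xs + t(x-\xs)) = \thalf \|F(x_t) - F(\xs)\|^2$ where $x_t := \xs + t(x-\xs)$, and consider $\phi(t) := t^{-2(1-\eta)} g(t)$. Since $F$ is Fréchet-differentiable, so is $g$ on $(0,1]$, and a direct chain-rule computation gives
\[ \phi'(t) = t^{-2(1-\eta)-1}\bigl[ t\, g'(t) - 2(1-\eta)\, g(t) \bigr], \]
with $g'(t) = \scl F(x_t) - F(\xs),\, F'(x_t)(x-\xs)\scr$. Multiplying by $t$ and using $x_t - \xs = t(x-\xs)$,
\[ t g'(t) = \scl F'(x_t)(x_t - \xs),\, F(x_t)-F(\xs)\scr,\qquad 2 g(t) = \|F(x_t)-F(\xs)\|^2. \]
Hence monotone increase of $\phi$ on $(0,1]$ is equivalent to
\[ \scl F'(x_t)(x_t - \xs),\, F(x_t)-F(\xs)\scr \geq (1-\eta)\|F(x_t)-F(\xs)\|^2 \qquad \forall t \in (0,1], \]
which is precisely \eqref{wscm} written at the point $x_t$.

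For the equivalence with \eqref{wsc} on the whole ball, I would exploit that $\Bx$ is star-shaped with respect to $\xs$. In the ``only if'' direction, given \eqref{wsc} for every $x \in \Bx$, the point $x_t$ lies in $\Bx$ for $t \in [0,1]$, so the above inequality holds at $x_t$ and $\phi' \geq 0$ on $(0,1]$ for every $x$; continuity of $\phi$ (extended by the Taylor-expansion limit $\phi(0)=\thalf\|F'(\xs)(x-\xs)\|^2$ if $\eta=0$, and $0$ if $\eta >0$) then yields monotonicity on $[0,1]$. Conversely, if $\phi$ is monotone on $[0,1]$ for every $x \in \Bx$, then $\phi'(1) \geq 0$ gives \eqref{wscm} at $x_1 = x$ itself, which is \eqref{wsc}.

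The only mildly delicate point is justifying the behaviour of $\phi$ at $t=0$: since $g(t) = \thalf\|tF'(\xs)(x-\xs) + o(t)\|^2 = O(t^2)$, we have $\phi(t) = O(t^{2\eta})$ as $t\to 0^+$, so $\phi$ extends continuously to $[0,1]$ and the endpoint $t=0$ causes no trouble. Otherwise the proof is a straightforward calculus computation, with the essential content being the algebraic identification of $tg'(t) - 2(1-\eta) g(t)$ with the defect in \eqref{wscm} at the intermediate point $x_t$.
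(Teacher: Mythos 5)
Your proof is correct and follows essentially the same route as the paper's: both compute the derivative of $G(t)=\Jls(\xs+t(x-\xs))$, rewrite $tG'(t)$ as $\scl F'(x_t)(x_t-\xs),F(x_t)-F(\xs)\scr$, and identify monotonicity of \eqref{map} with the differential inequality $G'(t)\geq 2(1-\eta)\tfrac{1}{t}G(t)$, i.e.\ \eqref{wscm} at the intermediate point $x_t$, with the converse obtained at $t=1$. Your additional care with the continuous extension of $\phi$ to $t=0$ is a small refinement the paper leaves implicit.
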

\begin{proof}
Let $x_t := \xs + t(x-\xs)$ and let $G(t):=  J_{LS}(\xs + t(x-\xs))$.  
Since 
$$ G'(t) = \scl F'(x_t),(x-\xs),F(x_t) - F(\xs) \scr = \frac{1}{t} \scl F'(x_t),(x_t-\xs),F(x_t) - F(\xs) \scr, $$
we have that \eqref{wsc} implies that 
$G'(t) \geq (1-\eta) 2 \frac{1}{t} G(t),$ from which the monotonicity   of \eqref{map} follows by calculating 
the derivative. On the other hand, if \eqref{map} is monotone then $G'(t) \geq (1-\eta) 2 \frac{1}{t} G(t)$ 
for $t \in (0,1]$  follows easily and hence, taking $t = 1$, we obtain \eqref{wsc}.
\end{proof}

Next, we verify that the strong tangential cone condition implies   \Nc{$\gamma$,$\beta$} with appropriate
parameter values.
 \begin{lemma}
Let the tangential cone condition \eqref{ssc} hold with $\eta <1$. 
 Then the least-squares functional \eqref{least squares} satisfies \Nc{$\gamma$,$\beta$}  with 
$\gamma < \left(\frac{\sqrt{1-\eta^2}}{1 + \sqrt{1-\eta^2}}\right)^2<1$ and
$\beta = - \left[(1-\eta^2) (1 - \sqrt{\gamma})^2  -\gamma\right] \frac{1}{2 (\sup_{x \in \Bx} \|F'(x)^*\|)^2 } <0$.
\end{lemma}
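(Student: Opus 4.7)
The plan is to rewrite the NC inequality in terms of the images $u := F(x_2) - F(\xs)$ and $v := F(x_1) - F(\xs)$, exploit the strong tangential cone condition \eqref{ssc}, and then identify the Young's-inequality weight that produces exactly the stated coefficient.

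First I would observe that since $\Jls(x) = \thalf\|F(x)-F(\xs)\|^2$, the hypothesis $\Jls(x_1) \leq \gamma \Jls(x_2)$ reads $\|v\| \leq \sqrt\gamma\,\|u\|$, the gradient is $\nabla \Jls(x_2) = F'(x_2)^* u$, and the quantity to lower-bound rewrites as $\scl \nabla \Jls(x_2), x_2-x_1\scr = \scl u, F'(x_2)(x_2-x_1)\scr$. Applying \eqref{ssc} with $x = x_2$ and $\tilde x = x_1$ decomposes $F'(x_2)(x_2-x_1) = (u-v) + R$ with $\|R\| \leq \eta\|u-v\|$. Combining the polarization identity $\scl u, u-v\scr = \thalf(\|u\|^2 + \|u-v\|^2 - \|v\|^2)$ with the Cauchy--Schwarz bound on $\scl u, R\scr$ yields the starting estimate
\[
\scl u, F'(x_2)(x_2-x_1)\scr \;\geq\; \thalf\bigl(\|u\|^2 + \|u-v\|^2 - \|v\|^2\bigr) - \eta\|u\|\,\|u-v\|.
\]

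The main step is to control the cross term using Young's inequality with the weight $\lambda = 1/\eta$, i.e.\ $\eta\|u\|\,\|u-v\| \leq \thalf\|u\|^2 + \tfrac{\eta^2}{2}\|u-v\|^2$, so that the $\|u\|^2$ contributions cancel. The bound then collapses to
\[
\scl u, F'(x_2)(x_2-x_1)\scr \;\geq\; \thalf(1-\eta^2)\|u-v\|^2 - \thalf\|v\|^2.
\]
Using the reverse triangle inequality $\|u-v\| \geq \|u\| - \|v\| \geq (1-\sqrt\gamma)\|u\|$ (valid because $\gamma < 1$ forces $\|v\| \leq \|u\|$) and $\|v\|^2 \leq \gamma\|u\|^2$, this gives the lower bound $\thalf[(1-\eta^2)(1-\sqrt\gamma)^2 - \gamma]\,\|u\|^2$.

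I would conclude by using $\|\nabla \Jls(x_2)\|^2 = \|F'(x_2)^* u\|^2 \leq (\sup_{x \in \Bx}\|F'(x)^*\|)^2\|u\|^2$ to convert the $\|u\|^2$-lower bound into the required form $-\beta\|\nabla \Jls(x_2)\|^2$ with $\beta$ exactly as stated. The condition $\beta < 0$ amounts to $(1-\eta^2)(1-\sqrt\gamma)^2 > \gamma$, i.e.\ $\sqrt{1-\eta^2}(1-\sqrt\gamma) > \sqrt\gamma$, which rearranges algebraically to the stated range $\sqrt\gamma < \sqrt{1-\eta^2}/(1+\sqrt{1-\eta^2})$. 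The only subtle point I anticipate is spotting the specific Young's weight $\lambda = 1/\eta$: more symmetric choices (e.g.\ $\lambda = 1$) leave a residual $\|u\|^2$ term and fail to produce the characteristic $(1-\eta^2)$ factor appearing in the lemma's coefficient.
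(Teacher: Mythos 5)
Your proof is correct and follows essentially the same route as the paper: both arguments arrive at the identical intermediate bound $\scl \nabla \Jls(x_2), x_2-x_1\scr \geq \thalf(1-\eta^2)\|F(x_2)-F(x_1)\|^2 - \thalf\|F(x_1)-F(\xs)\|^2$ and then finish with the reverse triangle inequality, $\|F(x_1)-F(\xs)\|^2\le\gamma\,\|F(x_2)-F(\xs)\|^2$, and $\|\nabla \Jls(x_2)\|\le C\|F(x_2)-F(\xs)\|$. The only cosmetic difference is that the paper first squares \eqref{ssc} into the equivalent form \eqref{expssc} and splits the left factor $F(x_2)-F(\xs)=(F(x_2)-F(x_1))+(F(x_1)-F(\xs))$, using Young's inequality to absorb the resulting cross term into $\thalf\|F'(x_2)(x_2-x_1)\|^2$, whereas you split the right factor $F'(x_2)(x_2-x_1)=(u-v)+R$ and cancel the $\thalf\|u\|^2$ from polarization with a differently weighted Young step --- both yield exactly the stated $\gamma$-range and $\beta$.
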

\begin{proof}
By expanding the terms, we verify that \eqref{ssc} is equivalent to the inequality 
\begin{equation}\label{expssc}
  \scl F'(x) (x-z), F(x)-F(z) \scr \geq (1 -\eta^2) \half \|F(x) - F(z) \|^2 + \half \|  F'(x) (x-z)\|^2 
\end{equation}
for all  $x,z \in \Bx$.  
Assume that $J(x_1) \leq \gamma J(x_2)$. Using \eqref{ssc} with $x = x_2$, $z = x_1$, Young's inequality,
and the triangle inequality, we obtain
\allowdisplaybreaks
 \begin{align*} & \scl \nabla J(x_2), (x_2-x_1) \scr =  \scl F(x_2) - F(x_1), F'(x_2) (x_2-x_1) \scr  \\
 &=   \scl F(x_2) - F(x_1), F'(x_2) (x_2-x_1) \scr   +  \scl F(x_1) - F(\xs), F'(x_2) (x_2-x_1) \scr   \\
 &\geq (1-\eta^2) \half \|F(x_2) - F(x_1)\|^2 +  \half \|F'(x_2) (x_2-x_1) \|^2 \\
 & \qquad \qquad -  \|F'(x_2) (x_2-x_1) \| \|F(x_1) - F(\xs)\| \\
 & \geq (1-\eta^2) \half \|F(x_2) - F(x_1)\|^2 - \half \|F(x_1) - F(\xs)\|^2\\
 & \geq (1-\eta^2) \half \left(\|F(x_2) - F(\xs)\|- \|F(x_1) - F(\xs)\| \right)^2   - \half \|F(x_1) - F(\xs)\|^2  \\
 & \geq  (1-\eta^2) (1 - \sqrt{\gamma})^2 \half \|F(x_2) - F(\xs)\|^2 - \gamma  \half \|F(x_2) - F(\xs)\|^2 \\  
 & \geq \left[(1-\eta^2) (1 - \sqrt{\gamma})^2  -\gamma\right] J(x_2) \geq 
 \left[(1-\eta^2) (1 - \sqrt{\gamma})^2  -\gamma\right] C^{-2}\frac{1}{2} \|\nabla J(x_2)\|^2, 
\end{align*}
with $C = \sup_{x \in \Bx} \|F'(x)^*\|$.
\end{proof}
This lemma justifies our claim that \Nc{$\gamma$,$\beta$}  is  a generalization of the tangential cone condition. 
Note, however, that quasiconvexity (i.e.,  \Nc{1,0}) or even convexity of the least-squares functional 
is not implied by the tangential cone conditions, while by our results, strong convergence  holds for 
quasiconvex (and convex) functionals if the balancing condition is additionally satisfied. 

Concerning the balancing condition, it can be shown that the  least-squares functional is $\gamma$-balanced if 
$F$ satisfies the strong tangential cone condition. 
\begin{lemma}
Let  \eqref{ssc} hold with $\eta <1$. Then  the least-squares 
functional \eqref{least squares} is $\gamma$-balanced around $\xs$ for any $\gamma \in (0,1]$. 
\end{lemma}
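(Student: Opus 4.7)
The strategy is to compare $F(\xs-\tau z_n)$ and $F(\xs+z_n)$ through their common linearization $F'(\xs)z_n$ and then to choose $\tau$ small enough to absorb the resulting constant into $\gamma$. The key tool is the two-sided estimate \eqref{furthermore}, which was derived from the strong tangential cone condition and lets one replace nonlinear differences by the linear quantity $\|F'(\xs)\,\cdot\,\|$ up to the factors $1/(1\pm\eta)$.

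First I would specialize \eqref{furthermore} to $x=\xs$: for every $h$ with $\xs+h\in\Bx$,
\[
\frac{1}{1+\eta}\,\|F'(\xs) h\|\;\leq\;\|F(\xs+h)-F(\xs)\|\;\leq\;\frac{1}{1-\eta}\,\|F'(\xs) h\|.
\]
Applying the upper bound with $h=-\tau z_n$ and the lower bound with $h=z_n$, and using linearity of $F'(\xs)$, yields
\[
\|F(\xs-\tau z_n)-F(\xs)\|\;\leq\;\frac{\tau}{1-\eta}\,\|F'(\xs)z_n\|\;\leq\;\frac{\tau(1+\eta)}{1-\eta}\,\|F(\xs+z_n)-F(\xs)\|.
\]
Squaring and recalling $F(\xs)=y$ gives
\[
\Jls(\xs-\tau z_n)\;\leq\;\left(\frac{\tau(1+\eta)}{1-\eta}\right)^{2}\Jls(\xs+z_n).
\]

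Second, I would choose $\tau:=\sqrt{\gamma}\,(1-\eta)/(1+\eta)$, so that the prefactor equals $\gamma$ and \eqref{fancy1} holds. Since $\gamma\leq 1$ and $\eta\in[0,1)$, one has $\tau\leq 1$, hence $\|\tau z_n\|\leq\|z_n\|\leq\rho$, so that $\xs-\tau z_n\in\Bx$ and the left-hand side is well defined (the point $\xs+z_n$ lies in $\Bx$ by assumption). Observe that in this application $\rho_0$ plays no role and the constant $n_0$ can be taken to be $0$: the inequality holds uniformly along the entire sequence.

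The ``hard part'' is essentially non-existent here: once \eqref{furthermore} is available, the lemma is an immediate consequence of the two-sided equivalence between $\|F(\xs+h)-F(\xs)\|$ and $\|F'(\xs)h\|$. The only technical point is checking that the perturbed points remain inside $\Bx$, which is secured by $\tau\leq 1$.
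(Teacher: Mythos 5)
Your proof is correct and follows essentially the same route as the paper: both apply the two-sided estimate \eqref{furthermore} at the base point $\xs$ to bound $\Jls(\xs-\tau z_n)$ from above and $\Jls(\xs+z_n)$ from below via $\|F'(\xs)z_n\|$, and then choose $\tau$ so that the resulting prefactor equals $\gamma$. Your bookkeeping is in fact the more careful one --- you keep the $\tau^2$ produced by squaring and hence take $\tau=\sqrt{\gamma}\,(1-\eta)/(1+\eta)$, whereas the paper writes $\tau$ where $\tau^2$ should appear and lands on $\tau=\gamma(1-\eta)/(1+\eta)$; since $\tau\le 1$ this discrepancy is harmless and both choices yield \eqref{fancy1}.
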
 
\begin{proof}
Indeed it follows from \eqref{furthermore} that 
\begin{align*}
2 J(\xs -\tau z) &= \|F(\xs -\tau z) - F(\xs) \|^2 \leq 
\frac{2}{1 -\eta} \|F'(\xs) (-\tau z)\|^2 \\
& = 
\tau \frac{2}{1 -\eta} \|F'(\xs) z \|^2 \leq 
\tau \frac{2(1 +\eta)}{1 -\eta} \|F(\xs+ z) - F(\xs)\|^2 \\
&= 
\tau \frac{1 +\eta}{(1 -\eta) \gamma} \gamma 2 J(\xs + z) 
\end{align*}
Thus $\tau = \frac{(1-\eta)\gamma}{(1+\eta)}$ provides \eqref{fancy1}.
\end{proof} 
We note that for convergence of the Landweber iteration, often the tangential cone condition is 
imposed with $\eta < \frac{1}{2}$. A consequence of our results is that strong convergence 
also follows with $\eta <1$.

 We provide another sufficient condition 
for the balancing condition \eqref{fancy1} if the classical weak tangential cone condition holds. 
\begin{lemma}
Let  $\xs$ is the unique global minimum in $\Bx$ and  let the weak tangential cone condition 
 \eqref{wsc} hold for some $\eta <1$.
If for any sequence with $\Delta_n \to 0$
\begin{equation}\label{indeterm} \limsup_{n} \frac{J(\xs + \Delta_n)}{J(\xs -\Delta_N)} > 0, 
\end{equation}
holds, then \eqref{fancy1} is satisfied for any $\gamma >0$.
\end{lemma}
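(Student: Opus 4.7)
The plan is a proof by contradiction combined with a diagonal extraction: assuming the balancing property fails for some $\gamma > 0$, I will build a sequence $\Delta_m \to 0$ along which $J(\xs + \Delta_m)/J(\xs - \Delta_m) \to 0$, contradicting \eqref{indeterm}.

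First, I would formalize the negation of \eqref{fancy1}: if it failed for some $\gamma > 0$, there would exist a sequence $(z_n)$ with $\rho_0 \leq \|z_n\| \leq \rho$ such that, for every $\tau > 0$, the set $\{n \in \N : J(\xs - \tau z_n) > \gamma J(\xs + z_n)\}$ is infinite. Choosing $\tau_m = 1/m$ and diagonalizing (pick $n_m$ in the $m$-th such set with $n_m$ strictly increasing), I obtain pairs $(\tau_m, n_m)$ with $\tau_m \to 0$ and
\[ J(\xs - \tau_m z_{n_m}) > \gamma J(\xs + z_{n_m}) \qquad \forall m \in \N. \]

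Next, I would invoke Proposition~\ref{thisprop}, which turns the weak tangential cone condition \eqref{wsc} with $\eta < 1$ into monotonicity of $t \mapsto t^{-2(1-\eta)} J(\xs + t w)$ on $[0,1]$ for every admissible $w$. Applied at $w = z_{n_m}$ with $t = \tau_m \leq 1$ compared to $t=1$, this gives
\[ J(\xs + \tau_m z_{n_m}) \leq \tau_m^{2(1-\eta)} J(\xs + z_{n_m}). \]
Chaining the two displays yields
\[ J(\xs - \tau_m z_{n_m}) > \gamma\, \tau_m^{-2(1-\eta)} J(\xs + \tau_m z_{n_m}). \]

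Finally, I would set $\Delta_m := \tau_m z_{n_m}$. Since $\|z_{n_m}\| \leq \rho$ and $\tau_m \to 0$, we get $\Delta_m \to 0$; since $\|z_{n_m}\| \geq \rho_0$, we also have $\Delta_m \neq 0$, so uniqueness of $\xs$ as the global minimum of $J$ in $\Bx$ guarantees $J(\xs - \Delta_m) > 0$. Rearranging,
\[ \frac{J(\xs + \Delta_m)}{J(\xs - \Delta_m)} < \frac{\tau_m^{2(1-\eta)}}{\gamma} \longrightarrow 0, \]
directly contradicting \eqref{indeterm} for this choice of $\Delta_m$.

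The point I expect to require the most care is the diagonal extraction — one has to ensure that a single index $n_m$ can be found simultaneously witnessing the failure at level $\tau_m$ with the $n_m$ strictly increasing, so that a bona fide sequence is produced and the ratios $J(\xs \pm \Delta_m)$ are well-defined. Once this is in place, the remaining steps are a routine combination of the monotonicity from Proposition~\ref{thisprop} with the uniqueness of $\xs$, which together force the ratio of mirror values past zero in the limit.
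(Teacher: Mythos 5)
Your proof is correct and follows essentially the same route as the paper's: negate \eqref{fancy1}, apply the monotonicity from Proposition~\ref{thisprop} along the segment to compare $J(\xs+\tau_m z_{n_m})$ with $J(\xs+z_{n_m})$, and drive the ratio $J(\xs+\Delta_m)/J(\xs-\Delta_m)$ to zero, contradicting \eqref{indeterm}. Your diagonal extraction just makes explicit the step the paper states tersely (``we find a sequence $\tau_n\to 0$''), and your observation that the strict inequality (or the uniqueness of the minimum) keeps the denominator positive is a welcome precision.
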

\begin{proof}
Fix $\gamma >0$ and  suppose that  \eqref{fancy1} does not hold. Then we find a sequence $z_n$ with 
 $\rho_0 \leq \|z_n\| \leq \rho$ and a sequence of $\tau_n>0$ with $\tau_n \to_{n \to \infty} 0$  and
 \[ J(\xs - \tau_n z_n ) >  \gamma J(\xs+ z_n). \]
However, by  Proposition~\ref{thisprop} it follows that 
$J(\xs+ z_n) \geq \tau_{n}^{-2(1-\eta)} J(\xs+ \tau_n z_n)$. 
Since $\Delta_n:= \tau_n z_n \to 0$,   
we obtain 
\[ \frac{J(\xs + \Delta_n)}{J(\xs -\Delta_n)} < \frac{ \tau_{n}^{2(1-\eta)} }{\gamma}  \to 0, \]
which contradicts \eqref{indeterm}.
 \end{proof}
This also illustrates that for linear problems, the balancing conditions is trivial. Indeed, as
the functional $J(\xs + \Delta)$ is a quadratic form $(A\Delta,\Delta)$ then, the ratio in \eqref{indeterm}
is  always $1$. If $J$ can be estimated around $\xs$ from below and above by a constant 
times an even-homogeneous functional (similar to \eqref{furthermore}), then \eqref{indeterm} is 
satisfied.

As a justification for our claim of a unification of nonlinearity conditions, we present the 
implications of these conditions in the following scheme: \\[2mm]

{\small
\begin{tabular}{ccc ccc c }
\text{convexity}  &  $\Rightarrow$  &  \text{quasiconvexity} &  $\Rightarrow$ & 
{\Nc{$\gamma$,0}}  &  
  \\[2mm]
 & & & &  $\Downarrow$  &  \\
\mbox{$ \begin{array}{c} 
 \text{weak tangential} \\
 \text{cone cond. \eqref{wsc}}\\
 \end{array}$}
 &$\Rightarrow$   
 & \text{\Nc{0,$\beta <0$}}  
 & 
 $\Rightarrow$ 
 &
 \text{\Nc{0,0}} 
 & $\Rightarrow$ 
 \text{\Nc{0,$\beta>0$}}
 & \\
  & & & &  & $\Downarrow$   \\
    & & & &  &  \mbox{$\begin{array}{c}
    \text{weak} \\
    \text{convergence} 
    \end{array}$} 
\end{tabular}
}\\[2mm]

While most of the traditional
(weak) convergence proofs use the left (separated) assumptions, we employ a 
weaker version (right-hand side)  that includes both of them as special cases.
The main result about weak convergence in this paper is indicated in  the last line of this table.

\section{Weak convergence}\label{Sec:three}

For the following analysis, it is convenient to introduce some shorthand notations
both for the noisy and exact case:  
\begin{align}
e_k &:= x_k - \xs,& \ed_k&:= \xd_k - \xs,&  \label{short1}  \\
\T_k &:= \nabla J(x_k) 
& 
 \Td_k &:= \nabla \Jd(\xd_k), 
 &  
 \label{short2} 
\end{align}
The gradient iterations can then be written as 
\begin{equation}\label{LW} \ed_{k+1} = \ed_{k+1} - \Td_k, \qquad e_{k+1} = e_{k+1} - \T_k. 
\end{equation}
The first lemma concerns monotonicity of the functional values. 
\begin{lemma}\label{lem:monotonicity}
Let  Assumption~\ref{Assmain}  hold and let $x_k, x_{k+1} \in \Bx$ be defined 
by \eqref{lwexact}.
Then the  functional values are monotonically decreasing:
\begin{equation}
J(x_{k+1}) \leq J(x_{k}) \, .
\end{equation}
Moreover, if $x_k \in \Bx$ for $k = 0,\ldots N$, then 
\[  \sum_{k=0}^{N-1} \|x_{k+1}-x_{k}\|^2  = 
\sum_{k=0}^{N-1} \|\T_k\|^2   \leq \frac{1}{ |1-L|} J(x_0)  <  \infty\, . \] 
\end{lemma}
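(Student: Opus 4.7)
The natural route is the classical descent lemma for functions with Lipschitz gradient, specialized to the unit step. The core estimate comes from writing the function-value increment as a line integral of the gradient and then controlling the deviation of that gradient from $\nabla J(x_k)$ via the Lipschitz assumption.

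Specifically, since $\Bx$ is convex, the segment $[x_k, x_{k+1}]$ lies in $\Bx$ whenever both endpoints do, so I can write
\[
J(x_{k+1}) - J(x_k) = \int_0^1 \scl \nabla J(x_k + t(x_{k+1}-x_k)), x_{k+1}-x_k \scr \, dt.
\]
Adding and subtracting $\nabla J(x_k)$ inside the inner product and using the iteration $x_{k+1} - x_k = -\T_k$, this becomes $-\|\T_k\|^2$ plus an error term that, by Assumption~\ref{Assmain}(\ref{A2k}), is bounded in modulus by $\int_0^1 L t \,\|\T_k\|^2 dt = \frac{L}{2}\|\T_k\|^2$. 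This yields the key inequality
\[
J(x_{k+1}) \leq J(x_k) - \left(1 - \tfrac{L}{2}\right) \|\T_k\|^2,
\]
and since $L < 1$ the coefficient $1 - L/2 \geq 1 - L > 0$ is strictly positive, giving $J(x_{k+1}) \leq J(x_k)$.

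For the second statement, I rearrange to $\|\T_k\|^2 \leq \frac{1}{1-L/2}\bigl(J(x_k) - J(x_{k+1})\bigr) \leq \frac{1}{1-L}\bigl(J(x_k) - J(x_{k+1})\bigr)$, sum from $k=0$ to $N-1$, telescope, and use $J(x_N) \geq 0$ together with $|1-L| = 1-L$. The identity $\|x_{k+1}-x_k\| = \|\T_k\|$ is a direct consequence of the defining recursion \eqref{lwexact}.

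There is essentially no obstacle; the only point that deserves a moment of care is that everything stays inside $\Bx$ so that the Lipschitz bound on $\nabla J$ can be applied along the segment connecting two consecutive iterates — but convexity of $\Bx$ makes this immediate from the hypothesis $x_k, x_{k+1} \in \Bx$. The role of $L < 1$ is purely to ensure the descent coefficient is positive; no subtler argument is needed.
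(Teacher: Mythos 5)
Your proof is correct and follows essentially the same route as the paper: the descent lemma from the Lipschitz gradient, the identification $\scl \T_k, x_{k+1}-x_k\scr = -\|\T_k\|^2$ from the iteration, and a telescoping sum. The only cosmetic difference is that you carry the sharper remainder constant $L/2$ before weakening to $1-L$, whereas the paper states the quadratic bound with constant $L$ directly and obtains the coefficient $1-L$ at once.
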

\begin{proof}
By Lipschitz continuity and  Assumption~\ref{Assmain}, we have  using $\Delta_k = x_{k+1}-x_{k}$, 
\begin{equation}\label{help1}  \left|J(x_k+\Delta_k) - J(x_k) -\scl \nabla J(x_k),\Delta_k \scr 
\right| \leq L \|\Delta_k\|^2. 
\end{equation} 
By \eqref{lwexact}  we have 
\[ \scl \nabla J (x_k), \Delta_k  \scr = - \|\T_k\|^2 = 
-\|\Delta_k\|^2. \]
Thus with \eqref{help1} and \eqref{A2k}, we obtain  
\begin{align}\label{help11}
J(x_{k+1})  -  J(x_{k})   =  J(x_k+\Delta_k) - J(x_{k})  \leq 
%
(L -1)  \|\Delta_k\|^2 <0,  
\end{align}
which proves the first assertion.  
A telescope sum,
\[ J(x_N)  -  J(x_0)   + |1-L| \sum_{l=0}^{N-1} \|\T_l\|^2 < 0, \]
 yields the second result.
\end{proof}

By completely the same proof and by replacing $J$ by $\Jd$ and using the ``noisy'' variables instead of the 
exact ones, we can verify the analogous result for $\Jd$.
\begin{lemma}\label{lem:monotonicityd}
Let  Assumption~\ref{Assmain}  hold and 
let $\xd_k $ be defined 
by \eqref{lwexact} and  let $\xd_k, \xd_{k+1} \in \Bx$.  
Moreover, assume for the Lipschitz constant of $\Jd$ that  $L_\delta <1.$ 
Then the  corresponding residuals are monotonically decreasing:
\begin{equation}
\Jd(\xd_{k+1})  \leq  \Jd(\xd_{k}) .
\end{equation}
Moreover, if $\xd_k \in \Bx$ for $k = 0,\ldots N$, then 
\[  \sum_{k=0}^{N-1} \|\xd_{k+1}-\xd_{k}\|^2  = \sum_{k=0}^{N-1} \|\Td_k\|^2 
\leq \frac{1}{ |1-L_\delta|} \Jd(x_0) <  \infty\, . \] 
\end{lemma}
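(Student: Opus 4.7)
The plan is to mimic the proof of Lemma~\ref{lem:monotonicity} verbatim, substituting $\Jd$ for $J$, $\xd_k$ for $x_k$, $\Td_k$ for $\T_k$, and using the Lipschitz constant $\Ld$ (which satisfies $\Ld<1$ by the added hypothesis) instead of $L$. The point of isolating this as a separate lemma is simply that \eqref{lwnoisy} is driven by $\nabla\Jd$, and the descent estimate must be carried out with respect to the functional whose gradient generates the step — otherwise the key telescoping cancellation in the next display below would not occur.

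First I would introduce $\Delta_k^\delta := \xd_{k+1} - \xd_k$. The Lipschitz continuity of $\nabla \Jd$ with constant $\Ld$, together with the fundamental theorem of calculus applied to $t \mapsto \Jd(\xd_k + t\Delta_k^\delta)$ on $[0,1]$, yields the analogue of \eqref{help1}:
\[
\bigl|\Jd(\xd_k+\Delta_k^\delta) - \Jd(\xd_k) - \langle \nabla \Jd(\xd_k), \Delta_k^\delta \rangle \bigr| \leq \Ld \|\Delta_k^\delta\|^2.
\]
Here one uses that the segment between $\xd_k$ and $\xd_{k+1}$ lies in $\Bx$, which is provided by the hypothesis $\xd_k, \xd_{k+1} \in \Bx$ and the convexity of the ball.

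Next I would use the defining identity of the iteration \eqref{lwnoisy}, $\Delta_k^\delta = -\Td_k$, which gives $\langle \nabla \Jd(\xd_k), \Delta_k^\delta \rangle = -\|\Td_k\|^2 = -\|\Delta_k^\delta\|^2$. Plugging this in yields
\[
\Jd(\xd_{k+1}) - \Jd(\xd_k) \leq (\Ld - 1)\|\Delta_k^\delta\|^2,
\]
which is strictly negative (unless $\Td_k = 0$) precisely because the added hypothesis $\Ld < 1$ is now assumed. This proves the monotonicity statement.

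Finally, summing this inequality from $k=0$ to $k=N-1$, all but two functional terms cancel in a telescope, leaving
\[
|1-\Ld|\sum_{k=0}^{N-1}\|\Td_k\|^2 \leq \Jd(\xd_0) - \Jd(\xd_N) \leq \Jd(\xd_0),
\]
where the last inequality uses nonnegativity of $\Jd$. Dividing by $|1-\Ld|$ gives the claimed bound, and the identity $\|\xd_{k+1}-\xd_k\| = \|\Td_k\|$ gives the first equality in the displayed sum. There is no real obstacle here: the only subtlety compared to the exact-data case is verifying that the assumption $\Ld < 1$ (rather than the weaker $L<1$ inherited via \eqref{noise1}, which only gives $\Ld \leq L+\delta$) is what licenses the strict sign in $(\Ld-1)<0$; this is precisely why the statement lists $L_\delta<1$ as a standing hypothesis.
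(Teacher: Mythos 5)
Your proof is correct and follows exactly the route the paper intends: the paper simply states that Lemma~\ref{lem:monotonicityd} follows ``by completely the same proof'' as Lemma~\ref{lem:monotonicity} with $J$, $x_k$, $\T_k$, $L$ replaced by their noisy counterparts, which is precisely what you carry out. Your added remarks on why $\Ld<1$ must be hypothesized separately (since \eqref{noise1} only gives $\Ld\leq L+\delta$) and on the convexity of $\Bx$ are accurate but not points the paper dwells on.
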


Next, we consider uniform bounds for the error for the iteration with the exact functional $J$. 
We recall the definition of the positive part $ f^+:= \max(f,0) $. 
\begin{lemma}\label{lemma2}
Let  Assumption~\ref{Assmain}  hold. 
Suppose that $x_k \in \Bx,$ for $k = 0,\ldots, N.$
Assume that $\Nc{0,\beta}$ holds for some $\beta \in \R.$ 
Then 
\[ \|e_{k+1}\|^2  \leq \|e_0\|^2 +  \frac{(1 + 2 \beta)^+}{ |1-L|} J(x_0), \qquad 
k = 0,\ldots N.\]
\end{lemma}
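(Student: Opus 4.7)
The plan is to control $\|e_{k+1}\|^2$ by a telescoping argument driven by the expansion of one iteration step, where the nonlinearity condition $\Nc{0,\beta}$ takes care of the cross term $\scl \T_k, e_k \scr$.

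First I would write the single-step identity
\[ \|e_{k+1}\|^2 = \|e_k - \T_k\|^2 = \|e_k\|^2 - 2\scl \T_k, e_k \scr + \|\T_k\|^2, \]
which follows directly from \eqref{LW}. The key step is then to bound the cross term from below. Since $\xs$ is a global minimum with $J(\xs) = 0$, the premise of the implication in $\Nc{0,\beta}$ is trivially satisfied by the choice $x_1 = \xs$, $x_2 = x_k$ (as $0 \cdot J(x_k) = 0 = J(\xs)$). This gives
\[ \scl \T_k, x_k - \xs \scr = \scl \nabla J(x_k), x_k - \xs \scr \geq -\beta \|\T_k\|^2, \]
so that $-2\scl \T_k, e_k \scr \leq 2\beta \|\T_k\|^2$. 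Plugging back,
\[ \|e_{k+1}\|^2 \leq \|e_k\|^2 + (1 + 2\beta)\|\T_k\|^2. \]

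Next I would telescope this inequality from $0$ to $k$:
\[ \|e_{k+1}\|^2 \leq \|e_0\|^2 + (1+2\beta)\sum_{l=0}^{k} \|\T_l\|^2. \]
If $1+2\beta \leq 0$ the final term is non-positive and the desired estimate trivially holds (with $(1+2\beta)^+ = 0$). Otherwise, I would invoke Lemma~\ref{lem:monotonicity}, which is applicable because $x_l \in \Bx$ for $l = 0,\ldots, N$, and yields $\sum_{l=0}^{k} \|\T_l\|^2 \leq \frac{1}{|1-L|} J(x_0)$. Combining gives the stated bound.

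The only real subtlety is recognizing that the $\gamma = 0$ case of the nonlinearity condition is precisely what is needed, since the hypothesis $J(x_1) \leq 0 \cdot J(x_2)$ is only satisfied (due to nonnegativity of $J$) when $x_1$ is a global minimum; there is no substantive obstacle beyond combining the one-step expansion with this observation and with the summability estimate already proven in Lemma~\ref{lem:monotonicity}.
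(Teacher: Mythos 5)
Your proof is correct and follows essentially the same route as the paper: the one-step expansion of $\|e_{k+1}\|^2$, the bound $-2\scl\T_k,e_k\scr \le 2\beta\|\T_k\|^2$ from \Nc{0,$\beta$} with $x_1=\xs$, telescoping, and the summability estimate from Lemma~\ref{lem:monotonicity}. Your explicit remarks on why the premise of \Nc{0,$\beta$} is met at $x_1=\xs$ and on the case $1+2\beta\le 0$ are slightly more detailed than the paper's proof but change nothing of substance.
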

\begin{proof}
By \eqref{LW} and with \eqref{weakgamma}, we have for $k \leq N$ 
\begin{align*} 
\|e_{k+1}\|^2 & = \|e_k\|^2 - 2 \scl \T_k, e_k \scr  + \|\T_k\|^2 \leq  \|e_k\|^2 +2 \beta \|\T_k\|^2   + \|\T_k\|^2 \\
& =  \|e_k\|^2  + (1 + 2 \beta) \|\T_k\|^2. 
\end{align*}
By telescoping we find with Lemma~\ref{lem:monotonicity} 
\[ \|e_{k+1}\|^2  - \|e_0\|^2 \leq   (1 + 2 \beta)  \sum_{l=0}^{k-1}  \|\T_l\|^2 \leq 
   \frac{(1 + 2 \beta)^+}{ |1-L|}  J(x_0). \]
\end{proof}

This lemma gives boundedness of the exact Landweber iteration. 
\begin{corollary}\label{corr}
Let  Assumption~\ref{Assmain}  and  let $\Nc{0,\beta}$ hold for some $\beta \in \R.$  Suppose that $x_0$ is such that 
\begin{equation}\label{init}  \|x_0 - \xs\|^2 +  \frac{(1 + 2 \beta)^+}{ |1-L|}   J(x_0) < \rho^2 \, .
\end{equation} 
Then $x_k \in \Bx$ for all $k \geq 0$.
\end{corollary}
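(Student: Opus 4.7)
The plan is to argue by induction on $k$, exploiting the fact that the bound in Lemma~\ref{lemma2} depends only on the initial data $\|e_0\|$ and $J(x_0)$ and is therefore uniform in $k$. The hypothesis \eqref{init} is set up so as to be exactly the quantity that appears on the right-hand side of that lemma.

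For the base case $k=0$, observe that \eqref{init} immediately yields $\|e_0\|^2 = \|x_0-\xs\|^2 < \rho^2$, so $x_0 \in \Bx$. For the inductive step, suppose that $x_0,\ldots,x_k \in \Bx$. Then each iterate in \eqref{lwexact} up to $x_{k+1}$ is well-defined, because $\nabla J$ is defined on $\Bx$. Since the hypotheses of Lemma~\ref{lemma2} are satisfied with $N=k$, I may invoke it to conclude
\[
\|e_{k+1}\|^2 \;\leq\; \|e_0\|^2 + \frac{(1+2\beta)^+}{|1-L|}\,J(x_0)\;<\;\rho^2,
\]
where the strict inequality is exactly \eqref{init}. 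Hence $x_{k+1} \in \Bx$, closing the induction.

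There is no real obstacle here: the only subtlety is the bookkeeping that the iteration \eqref{lwexact} makes sense only as long as the previous iterate lies in $\Bx$, so the induction must simultaneously establish well-definedness of $x_{k+1}$ and the bound $\|e_{k+1}\|<\rho$. The essential point is that the right-hand side of Lemma~\ref{lemma2} does \emph{not} grow with $k$; this is ultimately a consequence of the telescoping bound $\sum_l \|\T_l\|^2 \leq |1-L|^{-1}J(x_0)$ established in Lemma~\ref{lem:monotonicity}, which in turn relies on the monotone decrease of $J(x_k)$ guaranteed by the Lipschitz condition with constant $L<1$.
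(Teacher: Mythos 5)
Your proof is correct and follows exactly the paper's argument: induction on $k$, with the base case supplied by \eqref{init} and the inductive step by Lemma~\ref{lemma2} applied with $N=k$. The additional remarks on well-definedness of the iterates and the uniformity of the bound are accurate but do not change the route.
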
 
\begin{proof}
We proceed by induction. Clearly $x_0 \in \Bx$ by \eqref{init}. Suppose that $x_l \in \Bx$ for all $0 \leq l \leq k$. 
Then Lemma~\ref{lemma2} with $N = k$ yields that 
$\|e_{k+1} \| < \rho$, thus, $x_{k+1} \in \Bx$. By induction  it follows that $x_k \in \Bx$ for all $k \geq 0$. 
\end{proof} 

Next, we consider the noisy iteration and verify a uniform bound for $\ed_k$. 
The first  lemma provides a recursive estimate. 
\begin{lemma}\label{lem:rec}
Let Assumption~\ref{Assmain} hold. 
Suppose that $\xd_k \in \Bx$ and let $\Nc{0,\beta}$ hold for some $\beta \in \R$. 
Then 
\begin{equation}\label{eq:rec} \|\ed_{k+1}\|^2 
 \leq \|\ed_k\|^2  + \|\Td_k\|^2 \Cm+ 2 \delta  \|\ed_k\|  + 4 \beta^+ \delta^2 .
\end{equation}
with 
\begin{equation}\label{defcm} \Cm  = (1+ 4 \beta^+).  \end{equation}
\end{lemma}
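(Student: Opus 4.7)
The plan is to expand $\|\ed_{k+1}\|^2$ using the update rule \eqref{LW}, isolate the inner product $\langle \Td_k, \ed_k\rangle$, and then transfer information from the noisy gradient $\Td_k$ to the exact gradient $\nabla J(\xd_k)$ so that the nonlinearity condition $\Nc{0,\beta}$ becomes applicable. The key observation is that $\Nc{0,\beta}$ with $x_1=\xs$ and $x_2=\xd_k$ has a premise $J(\xs)=0 \leq 0\cdot J(\xd_k)$ that is automatically satisfied, hence the conclusion $\langle \nabla J(\xd_k), \xd_k-\xs\rangle \geq -\beta\|\nabla J(\xd_k)\|^2$ is available.

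First I would write
\[
\|\ed_{k+1}\|^2 = \|\ed_k\|^2 - 2\langle \Td_k, \ed_k\rangle + \|\Td_k\|^2,
\]
and split
\[
\langle \Td_k, \ed_k\rangle = \langle \nabla J(\xd_k), \xd_k-\xs\rangle + \langle \Td_k - \nabla J(\xd_k), \ed_k\rangle.
\]
The second term is bounded in absolute value by $\delta\|\ed_k\|$ via \eqref{five}, and applying $\Nc{0,\beta}$ to the first term yields
\[
-2\langle \Td_k, \ed_k\rangle \leq 2\beta \|\nabla J(\xd_k)\|^2 + 2\delta\|\ed_k\|.
\]

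Next I would convert $\|\nabla J(\xd_k)\|^2$ into $\|\Td_k\|^2$ modulo noise, again using \eqref{five}: since $\|\nabla J(\xd_k)\| \leq \|\Td_k\| + \delta$, one has $\|\nabla J(\xd_k)\|^2 \leq 2\|\Td_k\|^2 + 2\delta^2$. This estimate is only needed when $\beta > 0$; when $\beta \leq 0$ the term $2\beta\|\nabla J(\xd_k)\|^2$ is non-positive and can simply be discarded. Distinguishing these two cases and using $\beta^+$ uniformly, I obtain
\[
2\beta\|\nabla J(\xd_k)\|^2 \leq 4\beta^+\|\Td_k\|^2 + 4\beta^+ \delta^2.
\]

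Substituting back into the expansion of $\|\ed_{k+1}\|^2$ and collecting the $\|\Td_k\|^2$ coefficients as $1 + 4\beta^+ = \Cm$ gives \eqref{eq:rec}. There is no real obstacle here; the only subtlety worth flagging is the sign split on $\beta$, which is precisely what the $\beta^+$ notation in $\Cm$ absorbs, and the fact that $\Nc{0,\beta}$ is always applicable at $(x_1,x_2)=(\xs,\xd_k)$ because $J(\xs)=0$.
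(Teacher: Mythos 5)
Your proposal is correct and follows essentially the same route as the paper: expand $\|\ed_{k+1}\|^2$, split off the gradient-perturbation term $\Td_k-\nabla J(\xd_k)$ (bounded by $\delta$ via \eqref{five}), apply \Nc{0,$\beta$} at $(x_1,x_2)=(\xs,\xd_k)$, and then pass from $\|\nabla J(\xd_k)\|^2$ back to $\|\Td_k\|^2$ up to $O(\delta^2)$; the paper does this last step with Young's inequality on the cross term, which yields the identical constants $\Cm=1+4\beta^+$ and $4\beta^+\delta^2$. No gaps.
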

\begin{proof}
Define $\Hd_k = \Td(\xd_k)  - \T(\xd_k)$.
We obtain with the help of  \eqref{weakgamma}, \eqref{five}, and Young's  inequality,
\begin{align*} 
\|\ed_{k+1}\|^2  &= \|\ed_k\|^2  + \|\Td_k\|^2 - 2 \scl \Td_k,\ed_k \scr  \\
&=
 \|\ed_k\|^2  + \|\Td_k\|^2 - 2 \scl \T(\xd_k),\ed_k\scr - 2 \scl\Hd_k,\ed_k \scr     \\
& \leq   
 \|\ed_k\|^2  + \|\Td_k\|^2 + 2 \beta \|\T(\xd_k)\|^2   + 2 \|\ed_k\| \|\Hd_k\| \\
&  =  \|\ed_k\|^2  + \|\Td_k\|^2 + 2 \beta \|\Td_k + \Hd_k \|^2 + 2 \|\ed_k\|  \|\Hd_k\|  \\
&  \leq  \|\ed_k\|^2  + \|\Td_k\|^2 + 2 \beta \|\Td_k\|^2  + 2 \beta \|\Hd_k\|^2  \\
& \qquad \qquad + 
4 |\beta|  \|\Td_k\|   \|\Hd_k\|
+ 2 \|\ed_k\|  \|\Hd_k\|  \\
%
 &  \leq   \|\ed_k\|^2  + \|\Td_k\|^2(1+  2 \beta + 2 |\beta| )  + 
 (2 |\beta| +  2\beta)   \|\Hd_k \|^2 \\
 & \qquad + 2   \|\Hd_k\|  \|\ed_k\|
\end{align*}
With \eqref{five}, the inequality  \eqref{eq:rec} follows .
\end{proof}
The next lemma provides a uniform bound. 
\begin{lemma}\label{dreisechs}
Let Assumption~\ref{Assmain} hold and let $\Ld <1$. Suppose that $\xd_k \in \Bx$ for $k = 0,\ldots, N$,
and  let $\Nc{0,\beta}$  hold for some $\beta \in \R$. 
Define $\fack = \max\{1,2 \sqrt{\beta^+}\}$. 
Then 
 \begin{equation}\label{spliteq1}
\begin{split}
&\|\ed_{k+1}\|^2 + \frac{ \Cm}{|1-\Ld|} \Jd(x_0)  - \sum_{l=0}^{k}\|\Td_k\|^2 \Cm \\
& \leq \left( \sqrt{\|\ed_0\|^2 +  \frac{ \Cm}{ |1-\Ld|} \Jd(x_0)   } + \fack \delta k \right)^2  \qquad k = 0,\ldots N.
\end{split}
\end{equation} 
\end{lemma}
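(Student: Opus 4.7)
The plan is to rewrite the one-step recursion of Lemma~\ref{lem:rec} as a telescoping inequality for a suitable nonnegative energy. Set
\[
S_k := \|\ed_k\|^2 + \frac{\Cm}{|1-\Ld|} \Jd(x_0) - \Cm \sum_{l=0}^{k-1} \|\Td_l\|^2,
\]
so that the left-hand side of \eqref{spliteq1} is precisely $S_{k+1}$ and the first term under the square root on the right is $S_0$. The entire claim then reduces to $\sqrt{S_{k+1}} \leq \sqrt{S_0} + \fack \delta (k+1)$, which I would prove by induction on $k$.

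The first ingredient is nonnegativity: $S_k \geq \|\ed_k\|^2 \geq 0$. This is exactly where Lemma~\ref{lem:monotonicityd} is invoked, since under the hypothesis $\xd_l \in \Bx$ for $l = 0, \ldots, N$ it provides $\Cm \sum_{l=0}^{k-1} \|\Td_l\|^2 \leq \frac{\Cm}{|1-\Ld|} \Jd(x_0)$, and hence $\|\ed_k\| \leq \sqrt{S_k}$. The second ingredient is the one-step estimate: inserting the bound of Lemma~\ref{lem:rec} into the difference $S_{k+1} - S_k$ and applying $\|\ed_k\| \leq \sqrt{S_k}$ yields
\[
S_{k+1} \leq S_k + 2\delta \sqrt{S_k} + 4\beta^+ \delta^2.
\]
With $\fack = \max\{1, 2\sqrt{\beta^+}\}$ one has $2\fack \geq 2$ and $\fack^2 \geq 4\beta^+$, so the right-hand side is dominated by the perfect square $(\sqrt{S_k} + \fack \delta)^2$. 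Therefore $\sqrt{S_{k+1}} \leq \sqrt{S_k} + \fack \delta$, and iterating delivers the claim upon squaring.

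I do not expect a significant obstacle. The main subtlety is that the nonnegativity of $S_k$ must be established \emph{before} $\|\ed_k\| \leq \sqrt{S_k}$ is used; this is exactly why the reserve $\frac{\Cm}{|1-\Ld|} \Jd(x_0)$ is baked into the definition of $S_k$, as it absorbs precisely the total gradient budget supplied by Lemma~\ref{lem:monotonicityd}. The specific value of $\fack$ is then dictated purely by completing the square on the two perturbation terms $2\delta\sqrt{S_k}$ and $4\beta^+\delta^2$ inherited from Lemma~\ref{lem:rec}.
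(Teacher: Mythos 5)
Your proof is correct and is essentially the paper's own argument in a cleaner packaging: the paper runs the same induction directly on the accumulated bound, using Lemma~\ref{lem:rec} for the one-step estimate, Lemma~\ref{lem:monotonicityd} to guarantee $S_k \geq \|\ed_k\|^2 \geq 0$ so that $\|\ed_k\|\le\sqrt{S_k}$, and the identical completion of the square that dictates $\fack=\max\{1,2\sqrt{\beta^+}\}$. One remark: your recursion $\sqrt{S_{k+1}}\le\sqrt{S_k}+\fack\delta$ (exactly like the paper's own base case, which only establishes $S_1\le(\sqrt{S_0}+\fack\delta)^2$) delivers the right-hand side of \eqref{spliteq1} with $\fack\delta(k+1)$ rather than $\fack\delta k$; this off-by-one sits in the paper's statement of the lemma, is harmless in the subsequent applications, and is not a defect of your argument.
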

\begin{proof}
We proceed by induction over $N$. 
Let  $N = 0$ and assume that $x_0 \in \Bx$. 
For $k = 0$ we have by \eqref{eq:rec}
\begin{align*}
&\|\ed_{1}\|^2  + \frac{ \Cm}{|1-\Ld|} \Jd(x_0)  - \|\Td_0\|^2 \Cm \\
& \leq \|\ed_0\|^2  + \|\Td_0\|^2 \Cm+ 2 \delta  \|\ed_0\|  + 4 \beta^+ \delta^2  + \frac{ \Cm}{|1-\Ld|} \Jd(x_0)  - \|\Td_0\|^2 \Cm\\
& \leq \|\ed_0\|^2  + 2 \delta  \|\ed_0\|  + 4 \beta^+ \delta^2  + \frac{ \Cm}{|1-\Ld|} \Jd(x_0) \\ 
& \leq \left(\sqrt{ \|\ed_0\|^2  + \frac{ \Cm}{|1-\Ld|} \Jd(x_0) }\right)^2 +  2 \fack \delta  \|\ed_0\|+ 4 \beta^+ \delta^2  \\
& \leq   \left(\sqrt{ \|\ed_0\|^2  + \frac{ \Cm}{|1-\Ld|} \Jd(x_0) } + \xi \delta \right)^2 + \delta^2 (4 \beta^+  -\fack^2).
\end{align*}
Since the last term is negative by definition of $\fack$, the estimate holds for $k = 0 = N$.

Now suppose that if  $\xd_k \in \Bx$ for $k = 0,\ldots, N-1$,  then the estimate \eqref{spliteq1} holds for  for $k =0 \ldots, N-1$.
We show that this is also the case when $N$ is replaced by $N+1$. Thus, let $\xd_k \in \Bx$ for $k = 0,\ldots, N$.
By the induction hypothesis we only have to show that \eqref{spliteq1} holds for $k = N$. 

By Lemma~\ref{lem:monotonicityd},  we obtain
\begin{equation}\label{this}  \sum_{l=0}^{N-1} \|\Td_l\|^2 \leq  \frac{1}{ |1-\Ld|} \Jd(x_0) .
\end{equation}
For brevity, define $\myeta =  \frac{ \Cm}{ |1-\Ld|} \Jd(x_0).$ 
By Lemma~\ref{lem:rec} and since $\fack \geq 1$, we find
\begin{align*} 
\|\ed_{N+1}\|^2  -  \Cm\sum_{l=0}^{N} \|\Td_k\|^2 + \myeta &\leq 
\|\ed_{N}\|^2  - \Cm \sum_{l=0}^{N-1} \|\Td_k\|^2 + \myeta  + 2 \delta \|\ed_{N-1}\| + 4 \beta^+ \delta^2\\
& \leq \|\ed_{N}\|^2  - \Cm \sum_{l=0}^{N-1} \|\Td_k\|^2 + \myeta  + 2 \fack \delta \|\ed_{N}\| + 4 \beta^+ \delta^2.
\end{align*}
According to the induction hypothesis we have \eqref{spliteq1}, which allows to 
estimate the first three terms on the right-hand side. Moreover, by \eqref{this} and  \eqref{spliteq1},
again $\|\ed_{N}\|$ can be bounded by the right-hand side in \eqref{spliteq1}. Thus 
\begin{align*} 
&\|\ed_{N+1}\|^2  -  \Cm\sum_{l=0}^{N} \|\Td_k\|^2 + \myeta 
\leq 
 \left( \sqrt{\|\ed_0\|^2 +  \frac{ \Cm}{ |1-\Ld|}  \Jd(x_0)} + \fack \delta (N-1) \right)^2  
\\
& \qquad \qquad + 2 \delta \fack 
  \left( \sqrt{\|\ed_0\|^2 +  \frac{ \Cm}{ |1-\Ld|}  \Jd(x_0)} + \fack \delta (N-1) \right)
  +4 \beta^+ \delta^2,
\end{align*}
By completing the square as before and since $(4 \beta^+  -\fack^2) \leq 0$ we find \eqref{spliteq1} for $k \leq N,$
which proves the lemma.
\end{proof}

%

We have the following proposition: 
\begin{proposition}\label{prop}
Let Assumption~\ref{Assmain} and $\Nc{0,\beta}$ hold for some $\beta \in \R$.  
Let $L_\delta <1$ in $\Bx$ and  $\xd_0$ and $N\geq 0$ be such that 
\begin{equation}\label{initbound} 
      \left( \sqrt{\|e_0\|^2 +  \frac{ \Cm}{ |1-\Ld|} \Jd(x_0)  } + \fack \delta N \right)^2
   +  \Cm \FL(\Jd(x_0)) \leq \rho^2, 
  \end{equation}
 where $\Cm$ is defined in \eqref{defcm}. 
Then  for all $k \leq N,$  the sequence $\xd_k$ is in $\Bx$  and we have the estimate 
\eqref{spliteq1} for $k = 0,\ldots, N$. 
\end{proposition}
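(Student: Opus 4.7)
The plan is to bootstrap Lemma~\ref{dreisechs} into an unconditional statement by showing, by induction on $k$, that $\xd_k \in \Bx$ for every $0 \leq k \leq N$; once this is in place, Lemma~\ref{dreisechs} (applied with the final value of $N$) immediately yields \eqref{spliteq1} for $k = 0,\ldots,N$. The base case $k=0$ is clear: since $\xd_0 = x_0$ and every term added to $\|e_0\|^2$ on the left-hand side of \eqref{initbound} is non-negative, we have $\|\ed_0\|^2 = \|e_0\|^2 \leq \rho^2$.

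For the inductive step, suppose $\xd_l \in \Bx$ for $0 \leq l \leq k$ where $k \leq N-1$. Then Lemma~\ref{dreisechs} applies with $N$ replaced by $k$, and after rearranging \eqref{spliteq1} at index $k$ and abbreviating $\myeta := \frac{\Cm}{|1-\Ld|}\Jd(x_0)$, we obtain
\[
\|\ed_{k+1}\|^2 \leq \left(\sqrt{\|e_0\|^2+\myeta}+\fack\delta k\right)^2 - \myeta + \Cm\sum_{l=0}^{k}\|\Td_l\|^2.
\]
I would then split the gradient sum into its first $k$ summands and the terminal one. Lemma~\ref{lem:monotonicityd} (which applies by the induction hypothesis and $\Ld<1$) supplies $\sum_{l=0}^{k-1}\|\Td_l\|^2 \leq \frac{1}{|1-\Ld|}\Jd(x_0)$, while \eqref{Fleq} together with the monotonicity of $\Jd(\xd_l)$ (Lemma~\ref{lem:monotonicityd}) and of $\FL$ bounds the terminal term by $\|\Td_k\|^2 \leq \FL(\Jd(\xd_k)) \leq \FL(\Jd(x_0))$. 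Multiplying by $\Cm$, the first sum cancels the $-\myeta$ and leaves only $\Cm\FL(\Jd(x_0))$. Using $k \leq N$ in the remaining squared expression gives
\[
\|\ed_{k+1}\|^2 \leq \left(\sqrt{\|e_0\|^2+\myeta}+\fack\delta N\right)^2 + \Cm\FL(\Jd(x_0)) \leq \rho^2,
\]
where the last inequality is exactly the hypothesis \eqref{initbound}. Hence $\xd_{k+1}\in \Bx$, which closes the induction.

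The only delicate point is the treatment of the terminal gradient summand $\|\Td_k\|^2$, which falls outside the telescoping bound of Lemma~\ref{lem:monotonicityd} (that sum stops at $l=k-1$). This is precisely why Assumption~\ref{Assmain} includes the pointwise gradient-vs-functional bound \eqref{Fleq}, and why the standing hypothesis \eqref{initbound} contains the extra summand $\Cm\FL(\Jd(x_0))$ compared to the estimate in \eqref{spliteq1}.
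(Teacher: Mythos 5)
Your proof is correct and follows essentially the same route as the paper: induction on $k$, with the inductive step combining \eqref{spliteq1} at index $k$ (from Lemma~\ref{dreisechs}), the telescoping bound \eqref{this} from Lemma~\ref{lem:monotonicityd} to absorb the first $k$ gradient summands, and \eqref{Fleq} plus monotonicity of $\Jd(\xd_l)$ to control the terminal summand $\|\Td_k\|^2$ by $\FL(\Jd(x_0))$, so that \eqref{initbound} closes the induction. Your closing remark correctly identifies the role of \eqref{Fleq} and of the extra term $\Cm\FL(\Jd(x_0))$ in \eqref{initbound}, which is exactly how the paper's argument is structured.
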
 
\begin{proof}
We use induction over $k\leq N$. For $k = 0$, $\xd_0$ is in $\Bx$  by \eqref{initbound}. 
Let $\xd_l \in \Bx$ for $l = 0,\ldots k,$ $k < N$. We show that  $\xd_{k+1} \in \Bx$. 

From \eqref{this} (with the sum up to the index $k-1$) and \eqref{spliteq1} we may estimate 
\begin{align*} 
&\|\ed_{k+1}\|^2 \leq 
 \left( \sqrt{\|\ed_0\|^2 +  \frac{ \Cm}{|1-\Ld|} \Jd(x_0) } + \fack \delta k \right)^2  
  +  \Cm \|\Td_k\|^2.  
\end{align*}
Using \eqref{Fleq} for the last term on the right-hand side and  by Lemma~\ref{lem:monotonicityd} and
$\Jd(x_k) \leq \Jd(\xd_0),$ we observe that $\|\ed_{k+1}\| \leq \rho,$ thus $x_{k+1} \in \Bx$. 
Induction yields the assertion. The estimate \eqref{spliteq1} follows from Lemma~\ref{dreisechs}
%
\end{proof} 

Since it is well-known that the Landweber iteration has to be stopped 
for noisy data, we have to introduce a stopping criterion. 
Here we choose a simple a-priori rule: 
for each noise level $\delta$ define the stopping index $\Nd$ such that 

\begin{equation}\label{bla}
\lim_{\delta \to 0} \Nd = \infty, \qquad \lim_{\delta \to 0} \Nd \delta = 0, \qquad 
(\Nd+1) \delta \leq \frac{\rho}{2 \fack}. 
\end{equation}
We have the following theorem. 
\begin{theorem}\label{mainweak}
Let Assumption~\ref{Assmain} and $\Nc{0,\beta}$
hold for some $\beta \in \R$.
Let $x_0$ be close to $\xs$ such that 
\begin{equation}\label{thisinw}
\|e_0\|^2 +  \frac{2 \Cm}{ |1-L|} J(x_0)
   +  \Cm \FL (J(x_0))  \leq \frac{1}{16} \rho^2.
  \end{equation}
Let $\delta_l$ be a sequence of noise levels associated to 
noisy data via \eqref{five} and let them 
be sufficiently small such that 
\begin{equation}\label{suchthat}
\delta_l <  \frac{1-L}{2}, \qquad  
\FL(J(x_0) + \psi(\delta)) \leq  \FL(J(x_0))  + \frac{\rho^2}{8\Cm }, \qquad 
\frac{2 \Cm}{ |1-L|}  \psi(\delta)  \leq  \frac{\rho^2}{8}, 
\end{equation}
%
holds, and let the stopping index be chosen as in \eqref{bla}. 
Then $\xdl_{\Ndl}$ is in $\Bx$ and hence has a weakly convergent subsequence.

If  $x\to \T(x)$ is weakly sequentially closed on $\Bx$, 
then a limit of this subsequence is a stationary point of $J$.
Assume additionally that $\xs$ is the unique stationary point of $J$ in $\Bx$.
Then 
\[  \xdl_{\Ndl} \rightharpoonup \xs,  \qquad \text{as } \delta_l \to 0.\]
\end{theorem}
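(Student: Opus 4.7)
The plan is to prove the three assertions in order. For the first, I would verify the hypothesis \eqref{initbound} of Proposition~\ref{prop} with $N = \Ndl$. The bound $\delta_l < (1-L)/2$ in \eqref{suchthat} gives $\Ld \leq L + \delta_l < 1$ and $|1-\Ld|^{-1} \leq 2|1-L|^{-1}$. Using $\Jd(x_0) \leq J(x_0) + \psi(\delta_l)$, the second and third inequalities of \eqref{suchthat} yield
\[
\frac{\Cm}{|1-\Ld|}\Jd(x_0) \leq \frac{2\Cm}{|1-L|}J(x_0) + \frac{\rho^2}{8},\quad
\Cm\FL(\Jd(x_0)) \leq \Cm\FL(J(x_0)) + \frac{\rho^2}{8}.
\]
Combining these with \eqref{thisinw} (which bounds $\|e_0\|^2$, $\frac{2\Cm}{|1-L|}J(x_0)$, and $\Cm\FL(J(x_0))$ each by $\rho^2/16$) and with $\fack\delta_l \Ndl \leq \rho/2$ from \eqref{bla}, one checks that the left-hand side of \eqref{initbound} is at most $\rho^2$. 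Proposition~\ref{prop} then yields $\xdl_k \in \Bx$ for every $k \leq \Ndl$, so $\{\xdl_{\Ndl}\}_l$ is bounded in $X$ and, by Banach--Alaoglu, admits a weakly convergent subsequence.

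For the stationarity claim, I would use Lemma~\ref{lem:monotonicityd} to obtain the uniform bound
\[
\sum_{k=0}^{\Ndl - 1} \|\Td_k\|^2 \leq \frac{\Jd(x_0)}{|1-\Ld|} \leq C
\]
with $C$ independent of $l$. Since $\Ndl \to \infty$ by \eqref{bla}, one can select $\hat k_l \in \{0,\ldots,\Ndl-1\}$ (e.g.\ the argmin) so that $\|\Td_{\hat k_l}\|^2 \leq C/\Ndl \to 0$, and \eqref{five} then gives $\|\nabla J(\xdl_{\hat k_l})\| \leq \|\Td_{\hat k_l}\| + \delta_l \to 0$. The iterates $\xdl_{\hat k_l}$ lie in $\Bx$, so a further extraction yields a weakly convergent sub-subsequence whose limit $\tilde x$ is stationary by the assumed weak sequential closedness of $\nabla J$. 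In the uniqueness case, $\tilde x = \xs$, and the standard subsequence-of-subsequences principle then upgrades this to $\xdl_{\Ndl} \rightharpoonup \xs$ along the full sequence.

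The main obstacle is precisely the passage from the auxiliary indices $\hat k_l$ back to the stopping index $\Ndl$: the sum bound on $\|\Td_k\|^2$ naturally yields gradient control at an intermediate index, while the theorem refers to $\xdl_{\Ndl}$ itself. I expect the bridge to rely on the full strength of the telescoped estimate \eqref{spliteq1} together with the a-priori balance $\Ndl \delta_l \to 0$ from \eqref{bla}, showing that every weak subsequential limit of $\xdl_{\Ndl}$ (not merely of $\xdl_{\hat k_l}$) must be a stationary point. This delicate balance between the noise amplification controlled by $\fack\delta_l \Ndl$ and the decay of the minimal gradient along the trajectory is where the real technical work lies.
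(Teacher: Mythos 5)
Your first part---verifying \eqref{initbound} from \eqref{thisinw}, \eqref{suchthat}, and \eqref{bla}, then invoking Proposition~\ref{prop} to keep the iterates in $\Bx$ up to the stopping index and extract a weakly convergent subsequence---is correct and is exactly what the paper does. The genuine gap is in the stationarity step, and you have in fact diagnosed it yourself: the summed bound $\sum_{k=0}^{\Ndl-1}\|\Td_k\|^2\le C$ together with the argmin selection only yields $\|\nabla J(\xdl_{\hat k_l})\|\to 0$ at some \emph{intermediate} indices $\hat k_l$, so it shows that weak limits of $\xdl_{\hat k_l}$ are stationary, whereas the theorem asserts this for weak limits of $\xdl_{\Ndl}$ itself. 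Your final paragraph only conjectures how to bridge this (``I expect the bridge to rely on\ldots''), so the proof is not complete; moreover, the suggested reliance on the balance $\fack\delta_l\Ndl\to 0$ points in the wrong direction---that condition is used for containment in $\Bx$, not for gradient decay at the stopping index.

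The missing ingredient, which is how the paper proceeds, is to control the gradient at $k=\Ndl$ \emph{directly} via the single-step descent estimate \eqref{help11} applied to $\Jd$:
\begin{equation*}
\|\Td_{\Ndl}\|^2 \;\le\; \frac{1}{1-\Ld}\bigl(\Jd(\xdl_{\Ndl}) - \Jd(\xdl_{\Ndl+1})\bigr)\;\le\;\frac{2}{1-L}\bigl(\Jd(\xdl_{\Ndl}) - \Jd(\xdl_{\Ndl+1})\bigr),
\end{equation*}
which is precisely why Proposition~\ref{prop} must be applied with $N=\Ndl+1$ rather than $N=\Ndl$ (one needs $\xdl_{\Ndl+1}\in\Bx$ for the descent inequality at step $\Ndl$). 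This consecutive difference is then transferred to the exact functional using \eqref{Jfive}, and the paper argues via Corollary~\ref{corr} and Lemma~\ref{lem:monotonicity} that the exact values $J(x_k)$ form a decreasing, hence convergent, sequence, so their consecutive differences vanish as $\Ndl\to\infty$; combined with \eqref{five} this gives $\|\nabla J(\xdl_{\Ndl})\|\to 0$ at the stopping index, after which weak sequential closedness and the subsequence-of-subsequences argument finish the proof exactly as you describe. Without some version of this one-step argument (or another mechanism transferring gradient smallness from $\hat k_l$ to $\Ndl$, e.g.\ exploiting the monotone decrease of $\Jd(\xdl_k)$), your proposal does not establish the stated conclusion.
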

\begin{proof}
Since $\delta_l$ is small, from \eqref{noise1} if follows that $\Jd$ is Lipschitz with 
$\Ld <1$ and $\frac{1}{1-\Ld} \leq \frac{2}{1-L}$ for all $\delta = \delta_l$.
With \eqref{bla} and  \eqref{thisinw}, it may be verified that \eqref{initbound} holds 
for all $\delta_l$  and with $N = \Nd+1$.  
Thus, by Proposition~\ref{prop}, the iterates $\xd_k \in \Bx$ for all indices $k$ up
to the stopping index $\Nd+1$. In particular, $\xdl_{\Ndl}$ is bounded and has a weakly convergence
subsequence. 

Since $\xdl_{\Ndl+1} \in \Bx$, we have by \eqref{help11} that 
\[ \|\Td_{\Nd}\|^2 \leq  \frac{2}{1-L} \left(\Jd_{\Ndl} -  \Jd_{\Ndl-1} \right). \] 
From \eqref{Jfive} we find that 
\[ \|\Td_{\Nd}\|^2 \leq  \frac{2}{1-L} \left|J_{\Ndl} -  J_{\Ndl-1} \right| +  \frac{2}{1-L}\psi(\delta_l).  \] 
By Corollary~\ref{corr}, the sequence $x_k$ is in $\Bx$, hence by Lemma~\ref{lem:monotonicity}, 
the sequence $J_k$ is decreasing and hence convergent. Since $\Ndl \to \infty$, and $\delta_l \to 0$, we 
conclude by \eqref{five}  that
\[ \lim_{\delta_l\to 0} \|\T_{\Nd}\| = 0. \] 
Then, by weakly closedness,
\[ \xdl \rightharpoonup \tilde{x} \quad \mbox{ and }  \T(\xdl) \to 0 \Rightarrow \T(\tilde{x}) = 0. \] 
Hence the limit point is a stationary point. If the stationary point in $\Bx$ is unique, 
the any subsequence  has a weakly convergent subsequence with limit $\xs$, thus
$\xd_{\Nd}$ must converge weakly to $\xs$. 
\end{proof}

\section{Strong convergence}\label{Sec:four}
The next step in the analysis concerns a proof of strong convergence of the iterations. 
As it could be expected, this requires additional conditions, namely the functional has to 
be $\gamma$ balanced and satisfies \Nc{$\gamma$,$\beta$}.

\begin{lemma}\label{con}
Let Assumption~\ref{Assmain} and   \Nc{$0$,$\beta$} hold for some $\beta \in \R$. Let $x_0-\xs$ small enough that 
\eqref{init} holds. 
Then 
\begin{equation}\label{myxxx} \sum_{k=1}^\infty |\scl \T_k,e_k \scr|  < \infty. \end{equation}
\end{lemma}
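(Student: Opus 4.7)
The strategy is to split $\langle \T_k, e_k\rangle$ into its positive and negative parts, bound the negative part pointwise using \Nc{$0$,$\beta$}, and bound the positive part by telescoping the identity produced by expanding $\|e_{k+1}\|^2$.

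First, I would record the two facts already available from the previous lemmas. By Corollary~\ref{corr}, the hypothesis \eqref{init} guarantees that $x_k \in \Bx$ for every $k \geq 0$, so all the relevant quantities are defined. Moreover, Lemma~\ref{lem:monotonicity} applied on the whole sequence yields the summability of the gradients,
\[
\sum_{k=0}^{\infty} \|\T_k\|^2 \leq \frac{1}{|1-L|} J(x_0) < \infty.
\]

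Next, I would apply \Nc{$0$,$\beta$} with $x_1 = \xs$ and $x_2 = x_k$. Since $J(\xs) = 0$, the premise $J(x_1) \leq 0 \cdot J(x_k)$ holds trivially, so the conclusion gives
\[
\langle \T_k, e_k \rangle \geq -\beta \|\T_k\|^2 \qquad \forall k \geq 0.
\]
Writing $a^- := \max(-a,0)$, this means the negative part satisfies $\langle \T_k, e_k\rangle^- \leq \beta^+ \|\T_k\|^2$, and so
\[
|\langle \T_k, e_k \rangle| = \langle \T_k, e_k \rangle + 2\, \langle \T_k, e_k\rangle^- \leq \langle \T_k, e_k \rangle + 2\beta^+ \|\T_k\|^2.
\]
The second summand is already summable by the previous step, so it remains to sum the (signed) inner products.

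For that I would use the iteration $e_{k+1} = e_k - \T_k$, which upon squaring yields the exact identity
\[
2\langle \T_k, e_k \rangle = \|e_k\|^2 - \|e_{k+1}\|^2 + \|\T_k\|^2.
\]
Summing from $k=0$ to $N$ telescopes the first two terms and produces
\[
2 \sum_{k=0}^{N} \langle \T_k, e_k \rangle = \|e_0\|^2 - \|e_{N+1}\|^2 + \sum_{k=0}^{N} \|\T_k\|^2 \leq \|e_0\|^2 + \frac{1}{|1-L|} J(x_0),
\]
a bound independent of $N$. Combining this with the pointwise estimate above and letting $N \to \infty$ yields \eqref{myxxx}.

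I don't expect any real obstacle: the only mildly delicate point is verifying that \Nc{$0$,$\beta$} is indeed usable with $x_1 = \xs$ despite the degenerate case $\gamma = 0$, which is exactly the convention spelled out after Definition~\ref{thisdef}. Everything else is the standard telescoping argument combined with the summability of $\|\T_k\|^2$ from Lemma~\ref{lem:monotonicity}.
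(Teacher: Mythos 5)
Your proposal is correct and follows essentially the same route as the paper: the identity $2\langle \T_k,e_k\rangle = \|e_k\|^2-\|e_{k+1}\|^2+\|\T_k\|^2$, telescoping, and the use of \Nc{0,$\beta$} to control the negative contributions by $\beta^+\|\T_k\|^2$, all resting on Lemma~\ref{lem:monotonicity} and Corollary~\ref{corr}/Lemma~\ref{lemma2}. The only cosmetic difference is that you decompose each term as $|a|=a+2a^-$ while the paper splits the index set into the subsets where $\langle\T_k,e_k\rangle$ is nonnegative or negative; these are algebraically identical.
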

\begin{proof}
From \eqref{LW} we obtain that for any integer $n_1< n_2$ that  
\begin{align*}  
&\sum_{k=n_1}^{n_2} \left[\scl \T_k,e_k \scr+ (\T_k, e_{k+1}) \right] = 
- \sum_{k=n_1}^{n_2} \left[(e_{k+1} -e_{k},e_k+ e_{k+1}) \right] \\ 
& = -  \sum_{k=n_1}^{n_2}  \left[\|e_{k+1}\|^2 - \|e_{k}\|^2\right]  = - \|e_{n_2+1}\|^2 +\| e_{n_1}\|^2. 
\end{align*}
Moreover,  from  $(\T_k, e_{k+1}) = (\T_k, e_{k}) - \|\T_k\|^2$  we obtain 
\begin{align*}
&2 \sum_{k=n_1}^{n_2} \scl \T_k,e_k \scr = - \|e_{n_2+1}\|^2 +\| e_{n_1}\|^2  + \sum_{k=n_1}^{n_2} \|\T_k\|^2 \, .
\end{align*}
 We split  the sum into $I_1 = \{k \in [n_1,n_2]\,|   \scl \T_k,e_k \scr\geq 0\}$ 
 and $I_2 = \{k \in [n_1,n_2]\,|   \scl \T_k,e_k \scr < 0\}$ and use \eqref{weakgamma} to find 
\begin{align*}  
2 \sum_{k=n_1}^{n_2} |\scl \T_k,e_k \scr|   
&= 2 \sum_{k \in I_1}  \scl \T_k,e_k \scr   - 2 \sum_{k \in I_2} \scl \T_k,e_k \scr  \\
& =  - \|e_{n_2+1}\|^2 +\| e_{n_1}\|^2  + \sum_{k=n_1}^{n_2} \|\T_k\|^2  - 4 \sum_{k \in I_2} \scl \T_k,e_k \scr  \\
& \leq  - \|e_{n_2+1}\|^2 +\| e_{n_1}\|^2  + (1 + 4 \beta^+ )  \sum_{k = n_1}^{n_2} \|\T_k\|^2. 
\end{align*}
According to Lemma~\ref{lemma2} and \ref{lem:monotonicity}, the right-hand side is uniformly bounded.
\end{proof}

\begin{lemma}\label{ex3}
Let Assumption~\ref{Assmain} hold.
Suppose that $J$ satisfies \Nc{$\gamma$,$\beta$} and 
is  $\gamma$-balanced for some $\gamma  \geq 0$ and for some $\beta \in \R$.
For a subsequence $\esub{m}$  assume that 
$\liminf_m \|\esub{m}\| \geq \co >0.$ 
Then for any $0\leq s < k_m$ and $m \geq m_0$ 
\[ - \scl \T_s,\esub{m} \scr    \leq  \frac{1}{\tau}  \scl \T_s,e_s \scr  +  \frac{\beta}{\tau} \| \nabla J(x_s)\|^2 \, . \]
\end{lemma}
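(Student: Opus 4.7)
The plan is to use the $\gamma$-balancing condition to produce, for each large $m$, a ``mirror point'' $\xs - \tau\esub{m}$ whose functional value is controlled by $\gamma J(x_{k_m})$; then monotonicity of the exact iterates along $J$ pushes this bound up to $\gamma J(x_s)$ for every $s \leq k_m$; and finally the nonlinearity condition $\Nc{\gamma,\beta}$ applied to the pair $(x_1,x_2) = (\xs-\tau\esub{m},\, x_s)$ yields exactly the asserted inequality after a one-line rearrangement.

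In more detail, I would first note that since $\esub{m} \in \Bx$ we have $\|\esub{m}\| \leq \rho$, while $\liminf_m \|\esub{m}\| \geq \co > 0$ gives $\|\esub{m}\| \geq \rho_0$ for all sufficiently large $m$ (choosing $\rho_0 \leq \co$ in Definition~\ref{gbal}; this is where the hypothesis $\co > 0$ is used). Applying Definition~\ref{gbal} to the sequence $z_m = \esub{m}$ produces some $\tau > 0$ and an index $m_0$ so that
\[
 J(\xs - \tau \esub{m}) \;\leq\; \gamma\, J(\xs + \esub{m}) \;=\; \gamma\, J(x_{k_m})
 \qquad \forall m \geq m_0 .
\]
Next, for any $s$ with $0 \leq s < k_m$, Lemma~\ref{lem:monotonicity} gives $J(x_{k_m}) \leq J(x_s)$ (using that the iterates lie in $\Bx$, which is implicit in the setting of the lemma). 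Chaining these two inequalities gives $J(\xs - \tau \esub{m}) \leq \gamma J(x_s)$, which is precisely the premise of $\Nc{\gamma,\beta}$ with $x_1 = \xs - \tau\esub{m}$ and $x_2 = x_s$.

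Invoking $\Nc{\gamma,\beta}$ then yields
\[
 \scl \T_s,\, x_s - (\xs - \tau \esub{m})\scr \;\geq\; -\beta \|\T_s\|^2,
\]
i.e.\ $\scl \T_s, e_s\scr + \tau \scl \T_s, \esub{m}\scr \geq -\beta\|\T_s\|^2$. Dividing by $\tau > 0$ and moving the $\esub{m}$-term to the left gives exactly
\[
 -\scl \T_s,\esub{m}\scr \;\leq\; \frac{1}{\tau}\scl \T_s, e_s\scr + \frac{\beta}{\tau}\|\nabla J(x_s)\|^2,
\]
as claimed.

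The step I expect to require most care is the very first one, namely making sure that the balancing condition is applicable to the tail of $\esub{m}$: we need $\rho_0 \leq \|\esub{m}\| \leq \rho$ to hold eventually, which forces a compatibility between the constant $\rho_0$ in Definition~\ref{gbal} and the constant $\co$ in the hypothesis. After that point, everything is algebraic: the combination of balancing, monotonicity of $J(x_k)$, and $\Nc{\gamma,\beta}$ forces the inequality essentially by a substitution.
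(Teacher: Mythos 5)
Your proposal is correct and follows essentially the same route as the paper: apply the $\gamma$-balancing condition to $z_m=\esub{m}$ to get $J(\xs-\tau\esub{m})\leq\gamma J(x_{k_m})$, push this to $\gamma J(x_s)$ via the monotonicity of $J(x_k)$, and then invoke \Nc{$\gamma$,$\beta$} with $x_1=\xs-\tau\esub{m}$, $x_2=x_s$ and rearrange. Your extra remark that the hypothesis $\liminf_m\|\esub{m}\|\geq \co>0$ is needed precisely to make the tail of $\esub{m}$ eligible for Definition~\ref{gbal} (i.e.\ $\rho_0\leq\|\esub{m}\|\leq\rho$ eventually) is a point the paper leaves implicit, and is worth stating.
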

\begin{proof}
By \eqref{fancy1}, we find   a $\tau>0$ with
\[ J(\xs -\tau \esub{m}) \leq J(\xs +  \esub{m}) = \gamma J(x_{k_m}) \qquad \forall m \geq m_0 \]
Thus if $s \leq  k_m,$ and $m \geq m_0,$ we have by Lemma~\ref{lem:monotonicity} that $J(\xs -\tau \esub{m}) \leq  \gamma J(x_{k_m}) \leq  
\gamma J(x_s)$. 
We apply \eqref{weakgamma} with $x_2 = x_s$ and $x_1 =   \xs -\tau \esub{m}$.
It then holds that $x_1, x_2 \in \Bx$. This yields 
\[ \beta \| \nabla J(x_s)\|^2  \geq \scl \nabla J(x_s),\xs -\tau \esub{m} -x_s \scr  = -\scl \T_s, e_s\scr - \tau \scl \T_s,\esub{m} \scr. \]
%
\end{proof}

Summing up we arrive at the following theorem on convergence of the exact iteration.
\begin{theorem}\label{th:bbb}
Let Assumption~\ref{Assmain}  hold and 
suppose that $J$ satisfies \Nc{$\gamma$,$\beta$} and 
is  $\gamma$-balanced for some $\gamma  \geq 0$ and for some $\beta \in \R$.
Let $x_0-\xs$ small enough such that 
\eqref{init} holds. 
Then $x_k$ has a strongly convergent subsequence with a limit that is a stationary point. 
If $\xs$ is the unique stationary point of $J$ in $\Bx$, then the sequence $x_k$ converges to $\xs$. 
\end{theorem}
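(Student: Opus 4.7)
The plan is to combine the boundedness established in Corollary~\ref{corr} with the summability from Lemmas~\ref{lem:monotonicity} and \ref{con} and the balancing-based estimate from Lemma~\ref{ex3}, organized around a dichotomy on $\liminf_k\|e_k\|$. First I would invoke Corollary~\ref{corr} (whose hypothesis \eqref{init} is exactly the smallness assumed in the theorem) to conclude $x_k\in\Bx$ for all $k$; Lemma~\ref{lem:monotonicity} then yields $\sum_k\|\T_k\|^2<\infty$ (in particular $\T_k\to 0$), and Lemma~\ref{con} gives $\sum_k|\scl\T_k,e_k\scr|<\infty$.

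I would then split into two cases. If $\liminf_k\|e_k\|=0$, a subsequence $\esub{m}\to 0$ exists at once, so $x_{k_m}\to\xs$ strongly and the limit is trivially a stationary point. Otherwise $\liminf_k\|e_k\|\geq\co>0$, and I pass to a subsequence $\esub{m}$ fulfilling the hypothesis of Lemma~\ref{ex3} (in particular $\|\esub{m}\|\geq\co$ eventually, with $\co$ chosen at or above the balancing threshold $\rho_0$). For indices $n\leq m$ both large and every $k_n\leq s<k_m$, summing the estimate of Lemma~\ref{ex3} over $s$ and using the telescoping identity $\sum_{s=k_n}^{k_m-1}\T_s=\esub{n}-\esub{m}$ produces
\[
\|\esub{m}\|^2-\scl\esub{n},\esub{m}\scr\;\leq\;\epsilon_{m,n},
\]
where $\epsilon_{m,n}:=\tfrac{1}{\tau}\sum_{s=k_n}^{k_m-1}\scl\T_s,e_s\scr+\tfrac{\beta}{\tau}\sum_{s=k_n}^{k_m-1}\|\T_s\|^2$ is a tail of the two summable series from Step~1, hence $|\epsilon_{m,n}|\leq\epsilon_n\to 0$ as $n\to\infty$ uniformly in $m\geq n$.

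Applying Cauchy--Schwarz and dividing by $\|\esub{m}\|\geq\co$ yields $\|\esub{m}\|\leq\|\esub{n}\|+\epsilon_n/\co$, from which $\lim_m\|\esub{m}\|=L$ exists; inserting the lower bound $\scl\esub{n},\esub{m}\scr\geq\|\esub{m}\|^2-\epsilon_n$ into the expansion of $\|\esub{m}-\esub{n}\|^2$ gives
\[
\|\esub{m}-\esub{n}\|^2\;\leq\;\|\esub{n}\|^2-\|\esub{m}\|^2+2\epsilon_n\;\longrightarrow\;0,
\]
so $\{\esub{m}\}$ is Cauchy and converges strongly to some $\tilde e$; hence $x_{k_m}\to\tilde x:=\xs+\tilde e$ strongly. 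Lipschitz continuity of $\nabla J$ combined with $\T_{k_m}\to 0$ then forces $\nabla J(\tilde x)=0$, so $\tilde x$ is stationary.

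Finally, if $\xs$ is the unique stationary point in $\Bx$, the second branch of the dichotomy would produce a stationary $\tilde x\neq\xs$, which is impossible, so necessarily $\liminf_k\|e_k\|=0$. To upgrade subsequential convergence to convergence of the whole sequence I would reuse the recursion from the proof of Lemma~\ref{lemma2}, namely $\|e_k\|^2\leq\|\esub{m}\|^2+(1+2\beta)^+\sum_{l=k_m}^{\infty}\|\T_l\|^2$ valid for all $k\geq k_m$; both terms on the right vanish as $m\to\infty$ along the null-norm subsequence, giving $\|e_k\|\to 0$. The main obstacle is the Cauchy step: Lemma~\ref{ex3} only supplies a \emph{one-sided} upper bound on $-\scl\T_s,\esub{m}\scr$ (not on $|\scl\T_s,\esub{m}\scr|$), so this asymmetric information must be combined with a Cauchy--Schwarz lower bound on $\scl\esub{n},\esub{m}\scr$ to extract both the existence of the norm limit and Cauchyness, and one must take care that $\|\esub{m}\|$ be chosen above the threshold $\rho_0$ of Definition~\ref{gbal} so that the balancing hypothesis is actually usable.
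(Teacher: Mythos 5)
Your proof is correct and takes essentially the same route as the paper's: Corollary~\ref{corr} for boundedness, the summability results of Lemmas~\ref{lem:monotonicity} and \ref{con}, and the telescoping identity $\sum_{s=k_n}^{k_m-1}\T_s=\esub{n}-\esub{m}$ combined with Lemma~\ref{ex3} to show the subsequence is Cauchy, then $\T_{k_m}\to 0$ and continuity of $\nabla J$ to identify the limit as a stationary point. Your two small deviations---extracting the existence of $\lim_m\|\esub{m}\|$ from the one-sided inequality via Cauchy--Schwarz rather than preselecting a norm-convergent subsequence, and making the final full-sequence convergence explicit through the recursion of Lemma~\ref{lemma2} instead of the paper's ``standard subsequence argument''---are harmless refinements, and the $\rho_0$-threshold caveat you flag is an imprecision already present in the paper's own statement of Lemma~\ref{ex3}.
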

\begin{proof}
%
By Corollary~\ref{corr}, $e_k$ is bounded for all $k.$ 
Hence, there exist a subsequence, where $\|\esub{m}\|$ is convergent. 
If $e_k$ has a strongly convergent subsequence  with limit $0$, then  we are finished. 
Otherwise,  for any subsequence we have $\liminf \|\esub{m}\| \geq \co >0,$ in particular 
also for one for which $ \|\esub{m}\|$ is convergent.  
Take such a subsequence and write for $k_m>k_n \geq m_0,$ where $m_0$ is the index 
in \eqref{fancy1} 
\begin{align*}  \|\esub{n} - \esub{m}\|^2  &=  \|\esub{n}\|^2 - \|\esub{m}\|^2 + 2\scl\esub{m}-\esub{n},\esub{m}\scr \\
& = \|\esub{n}\|^2 - \|\esub{m}\|^2 -  2\scl\sum_{s=k_n}^{k_m-1} \T_s ,\esub{m}\scr \, .
\end{align*}
From Lemma~\ref{ex3}, we obtain that 
\begin{align*}  \|\esub{n} - \esub{m}\|^2  
& \leq  \|\esub{n}\|^2 - \|\esub{m}\|^2 + \frac{2}{\tau} \sum_{s=k_n}^{k_m-1} \scl \T_s ,e_s\scr + \frac{2 \beta}{\tau}\sum_{s=k_n}^{k_m-1} \| \nabla J(x_s)\|^2 \\
&\leq 
\|\esub{n}\|^2 - \|\esub{m}\|^2 + \frac{2}{\tau} \sum_{s=k_n}^{k_m-1} |\scl \T_s ,e_s\scr |   +
\frac{2\beta}{\tau}\sum_{s=k_n}^{k_m-1} \| \nabla J(x_s)\|^2 \,.
\end{align*}
By  \eqref{myxxx} 
and since $\|\esub{m}\|$ is convergent, we may find for any given $\epsilon$ an $n_0 \geq m_0$ such that 
for all $k_m>k_n > n_0$ the right-hand side is smaller than  $\epsilon$. 
Thus, $\esub{n}$ is a Cauchy sequence and hence convergent. Since by Lemma~\ref{lem:monotonicity}, 
$\T_{k_m} \to 0,$ and 
$\T$ is continuous, it follows that the limit $\tilde{x}$ must be a stationary point. 
If $\xs$ is the only possibility of such a limit, it follows by a standard subsequence argument, 
that $x_k$ must converge to $\xs$. 
\end{proof}

We now come to the main result of strong convergence in the noisy case. 
Concerning the stopping criterion, we define for 
each noise level $\delta$ the stopping index $\Nd$ according to \eqref{bla}.
%
Then we have the following theorem. 
\begin{theorem}\label{th:mainstrong}
Let Assumption~\ref{Assmain}  hold and 
suppose that $J$ satisfies \Nc{$\gamma$,$\beta$} and 
is  $\gamma$-balanced for some $\gamma  \geq 0$ and for some $\beta \in \R$.
Let $x_0-\xs$ small enough such that 
\eqref{init} holds. 
Let  the  sequence of noise levels  $\delta_l \to 0$  be sufficiently small such that \eqref{suchthat} holds and 
let the stopping index be chosen as in \eqref{bla}. 

Assume that $\xs$ is the unique stationary point of $J$ in $\Bx$.
Then 
\[ \lim_{\delta \to 0} \xd_{\Nd} = \xs, \]
\end{theorem}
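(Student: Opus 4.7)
The plan is to combine the weak convergence furnished by Theorem~\ref{mainweak} with an adaptation of the Cauchy-sequence argument used to prove Theorem~\ref{th:bbb}. First, I would check that the hypotheses \eqref{thisinw}, \eqref{suchthat}, and \eqref{bla} imply \eqref{initbound} with $N = \Ndl + 1$, exactly as in the proof of Theorem~\ref{mainweak}. Proposition~\ref{prop} then guarantees $\xdl_k\in\Bx$ for every $k \leq \Ndl + 1$, and Theorem~\ref{mainweak}, invoked under the uniqueness hypothesis, gives $\xdl_{\Ndl}\rightharpoonup \xs$. It remains only to promote this weak convergence to strong convergence.

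I would argue by contradiction: suppose that along some subsequence (still indexed by $l$) one has $\|\ed_{\Ndl}\|\geq\co>0$ and, after a further extraction, $\|\ed_{\Ndl}\|$ converges to a value in $[\co,\rho]$. The sequence $z_l := \ed_{\Ndl}$ then lies in the regime $\rho_0 \leq \|z_l\| \leq \rho$ of Definition~\ref{gbal}, so the balancing condition provides a $\tau>0$ and an $l_0$ with $J(\xs - \tau \ed_{\Ndl}) \leq \gamma J(\xd_{\Ndl})$ for $l \geq l_0$. Expanding via the iteration \eqref{LW} gives
\[
\|\ed_{\Ndl}\|^2 = \scl \ed_{\Ndl}, \ed_0 \scr - \sum_{s=0}^{\Ndl-1}\scl \Td_s,\ed_{\Ndl}\scr,
\]
and the first term tends to zero by weak convergence of $\ed_{\Ndl}$ to $0$, so the task reduces to showing that the sum vanishes as $l\to\infty$.

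To control each summand I would derive a noisy analog of Lemma~\ref{ex3}: combining the balancing inequality with the monotonicity of $\Jd$ along the iterates (Lemma~\ref{lem:monotonicityd}) and $|J-\Jd|\leq\psi(\delta_l)$ from \eqref{Jfive} yields $J(\xs-\tau\ed_{\Ndl})\leq \gamma J(\xd_s) + 2\gamma\psi(\delta_l)$ for every $s\leq \Ndl$; feeding this into \Nc{$\gamma,\beta$} and using \eqref{five} to pass from $\nabla J(\xd_s)$ to $\Td_s$ produces a bound of the form
\[
-\scl \Td_s, \ed_{\Ndl}\scr \leq \tfrac{1}{\tau}|\scl \Td_s,\ed_s\scr| + \tfrac{\beta^+}{\tau}\|\Td_s\|^2 + R_s(\delta_l),
\]
with a remainder $R_s$ driven by $\delta_l$ and $\psi(\delta_l)$. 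Summing, the first two pieces are uniformly bounded: $\sum\|\Td_s\|^2$ by Lemma~\ref{lem:monotonicityd}, and $\sum|\scl\Td_s,\ed_s\scr|$ by a noisy analog of Lemma~\ref{con} obtained from telescoping the recursion in Lemma~\ref{lem:rec}. The main obstacle will be the aggregate remainder $\sum_s R_s(\delta_l)$: one must absorb the $\psi(\delta_l)$-slack in the premise of \Nc{$\gamma,\beta$} (for instance by observing that, once $\delta_l$ is small, the slack is negligible compared to $J(\xd_s)$ on the range of indices $s$ that actually contribute, or by a continuity-in-$\gamma$ argument) and then verify that the scalings $\Ndl\delta_l\to 0$ from \eqref{bla} and $\psi(\delta_l)\to 0$ from \eqref{suchthat} suffice to force $\sum_s R_s(\delta_l)\to 0$. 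Once this is established, $\|\ed_{\Ndl}\|^2\to 0$, contradicting the assumption $\|\ed_{\Ndl}\|\geq\co$, and strong convergence $\xd_{\Nd}\to\xs$ follows.
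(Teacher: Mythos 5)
Your overall strategy (weak convergence from Theorem~\ref{mainweak} plus an upgrade to norm convergence via the balancing condition) is structurally different from the paper's, and it contains a genuine gap at its central step. After expanding $\|\ed_{\Ndl}\|^2 = \scl \ed_{\Ndl},\ed_0\scr - \sum_{s=0}^{\Ndl-1}\scl\Td_s,\ed_{\Ndl}\scr$, you bound the sum by a noisy analog of Lemma~\ref{ex3} and then observe that $\sum_s|\scl\Td_s,\ed_s\scr|$ and $\sum_s\|\Td_s\|^2$ are \emph{uniformly bounded}. But uniform boundedness is not enough: it only yields $\|\ed_{\Ndl}\|^2 \leq o(1) + \tfrac{1}{\tau}C$ with a fixed constant $C$ of order $\tfrac{\Cm}{1-L}J(x_0)$, which is no contradiction to $\|\ed_{\Ndl}\|\geq \co$. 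The analogous argument in Theorem~\ref{th:bbb} works precisely because the sums there run from $k_n$ to $k_m-1$ with $k_n\to\infty$, i.e.\ they are \emph{tails} of convergent series and hence small; your sum starts at $s=0$ and is the full series. Trying to repair this by splitting at a fixed index $m$ runs into a second obstacle: for the tail $s\geq m$ you would have to transfer smallness from the exact series $\sum_{s \geq m}|\scl\T_s,e_s\scr|$ to the noisy one, and the pointwise discrepancy $\|\xd_s-x_s\|$ grows like $\delta\bigl((1+L)^s-1\bigr)/L$, which summed up to $\Ndl$ is \emph{not} controlled by the condition $\Ndl\delta\to 0$ from \eqref{bla}.

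The paper takes a different route that avoids both problems: it does not use Theorem~\ref{mainweak} at all, but instead leans on the \emph{strong} convergence of the exact iteration (Theorem~\ref{th:bbb}, which is where the balancing condition and \Nc{$\gamma$,$\beta$} enter). Fixing $m$, it runs the recursion \eqref{eq:rec} for the noisy error only from index $m$ to $\Nd$, so the noise accumulates only as $O(\Nd\delta)$ per the stopping rule; the remaining pieces are $\|e_m\|^2$ and $\sum_{k\geq m}\|\T_k\|^2$, both small for large $m$ by Theorem~\ref{th:bbb} and Lemma~\ref{lem:monotonicity}, and the finite-horizon stability estimate $\|\xd_m-x_m\|\leq \delta\bigl((1+L)^m-1\bigr)/L$, which is harmless because $m$ is fixed before $\delta\to 0$. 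I would recommend restructuring your argument along these lines, i.e.\ error at $\Nd$ $\leq$ exact error at $m$ $+$ propagated noise from $m$ onward $+$ divergence of the two trajectories up to $m$, rather than attempting to promote the weak limit directly.
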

\begin{proof}
As in Theorem~\ref{mainweak}, $\ed_k$ for $k = 0,\ldots \Nd+1$ is bounded by $\rho$
such that  $\xd_{\Nd}$ is always in $\Bx$ and hence uniformly bounded. 
Take a fixed $m$  and assume that $\delta$ is sufficiently small such that 
$\Nd \geq m$. 
%
From \eqref{eq:rec} we may estimate recursively that (using the fact that we may take 
$\beta = 0$)
%
%
\begin{align*}  \|\xd_{\Nd} -\xs\|^2  &= \|\ed_{\Nd}\|^2 \leq 
\|\ed_{m}\|^2 +  \sum_{k=m}^{\Nd-1} \Cm \|\Td_k\|^2 + 
2 \delta \sum_{k=m}^{\Nd-1} \|\ed_k\| + 4 \beta^+\delta^2 (\Nd -m) \\
& \leq 
\|\ed_{m}\|^2 +  \sum_{k=m}^{\Nd-1} \Cm \|\Td_k\|^2 + 2 \delta \Nd \rho + 4 \beta^+\delta^2 \Nd \\
&\leq 
\|\ed_{m}\|^2 +  \sum_{k=m}^{\Nd-1}\Cm  \|\T_k\|^2 + 2 \Cm \Nd \delta^2 + 2 \delta \Nd \rho  + 4 \beta^+\delta^2 \Nd, 
%
%
\end{align*}
where we used \eqref{five} in the last step.
The recursion for $\xd_k - x_k$ might be estimated by Assumption~\ref{Assmain}:
\[ \|\xd_{k+1} - x_{k+1}\| \leq (1 + L) \|\xd_{k} - x_{k}\|  + \delta  \Rightarrow  
\|\xd_{k+1} - x_{k+1}\|  \leq \delta \frac{1}{L}\left( (1+ L)^{k+1} -1 \right). \]
Thus for $\Nd \geq m$ we have 
\begin{align*}  &\|\xd_{\Nd} -\xs\|^2  \\
&\leq 
2 \|e_{m}\|^2  + 2\delta^2  \tfrac{1}{L^2}\left( (1+ L)^{m} -1 \right)^2 \\
& \qquad +  
\sum_{k=m}^{\Nd-1} \Cm  \|\T_k\|^2 + 2 \Nd \Cm \delta^2 + 2 \delta \Nd \rho   + 4 \beta^+\delta^2 \Nd. 
%
%
\end{align*}
Fix an $\epsilon>0.$ Since  the assumptions imply that $\|e_m\|$ converges to $0$ and
the sum of the squares of the gradients is convergent, and by the parameter choice \eqref{bla}, 
we may  find a $m$ (depending on $\epsilon$)  and a $\delta_0$ such that for all $\delta \leq \delta_0$ 
\[ \|\xd_{\Nd} -\xs\|^2  \leq 2\delta^2 \tfrac{1}{L^2}\left( (1+ L)^{m} -1 \right)^2 + \epsilon. \]
Taking $\delta$ even smaller (depending on $m$) yields that the right-hand side is 
smaller than $2\epsilon$. Thus $\lim_{\delta \to 0} \xd_{\Nd} = \xs$. 
\end{proof} 

As a corollary we obtain a result which cannot be proven by the approach via tangential cone conditions.
\begin{corollary}\label{th:convex}
Let Assumption~\ref{Assmain}  hold  and let the level sets $\{J < \alpha\}\cap \Bx$
be convex and let $J$ be $1$-balanced. With  
$x_0-\xs$, $\delta$ and the stopping index as in Theorem~\ref{th:mainstrong}
for $\gamma = 1$, and if $\xs$ is the unique stationary point of $J$ in $\Bx$, then
we have
\[ \lim_{\delta \to 0} \xd_{\Nd} = \xs, \]
\end{corollary}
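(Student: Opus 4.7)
The plan is to observe that the hypotheses of the corollary imply the hypotheses of Theorem~\ref{th:mainstrong} with the parameter choice $\gamma=1$ and $\beta=0$, so the corollary reduces to a direct application of that theorem.

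First I would translate the convexity of level sets into the nonlinearity condition \Nc{$1$,$0$}. The assumption that $\{J<\alpha\}\cap \Bx$ is convex for every $\alpha$ is (modulo continuity, which is free from Fréchet-differentiability in Assumption~\ref{Assmain}) the definition of $J$ being quasiconvex on the convex set $\Bx$. Quasiconvexity in the sense of the paper is the case $\gamma=1$ of $\gamma$-quasiconvexity, so by the Proposition characterizing $\gamma$-quasiconvexity for differentiable functionals, \Nc{$1$,$0$} holds on $\Bx$.

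Next, I would use the monotonicity in $\beta$ of the \Nc conditions: since \Nc{$1$,$0$} implies \Nc{$1$,$\beta$} for any $\beta\geq 0$, I may take $\beta=0$. Combined with the second hypothesis of the corollary (namely that $J$ is $1$-balanced around $\xs$), this gives the two nonlinearity assumptions required by Theorem~\ref{th:mainstrong} with $\gamma=1$ and $\beta=0$.

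Finally, since $x_0$, $\delta_l$ and the stopping index $\Nd$ are assumed to satisfy \eqref{init}, \eqref{suchthat} and \eqref{bla} for this choice of $\gamma$, and since $\xs$ is assumed to be the unique stationary point of $J$ in $\Bx$, Theorem~\ref{th:mainstrong} immediately yields $\lim_{\delta\to 0} \xd_{\Nd}=\xs$, which is the claim. The only nontrivial step here is the passage from convex level sets to \Nc{$1$,$0$}, but this is already recorded in the Proposition earlier in Section~\ref{Sec:two}, so there is no real obstacle; the corollary is essentially just a repackaging of Theorem~\ref{th:mainstrong} in the quasiconvex setting, and its interest is that quasiconvexity (and in particular convexity) of the least-squares functional is not accessible via the tangential cone conditions.
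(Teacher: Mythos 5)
Your argument is correct and is exactly the reduction the paper intends (the corollary is stated without proof as an immediate consequence): convex sublevel sets give quasiconvexity, hence \Nc{$1$,$0$} by the characterization proposition, and together with $1$-balancedness this places you in the hypotheses of Theorem~\ref{th:mainstrong} with $\gamma=1$, $\beta=0$. The only cosmetic remark is that the corollary states convexity of the strict sublevel sets $\{J<\alpha\}$ while the paper's discussion of quasiconvexity uses $\{J\leq\alpha\}$; for a continuous (indeed Fr\'{e}chet-differentiable) $J$ these are equivalent, so nothing is lost.
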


\section{Conclusion}
In this paper, we considered gradient descent iterations for functionals 
with Lipschitz-continuous derivative. We introduced new restrictions on the 
nonlinearity of the problem, namely the conditions \Nc{$\gamma$,$\beta$} 
and the $\gamma$-balancing conditions. We have shown that they are weaker than
several classical nonlinearity conditions.

The first main result concern weak convergence for the exact and noisy 
case of gradient iterations if the condition \Nc{0,$\beta$} with some $\beta \in \R$ 
holds and using an a-priori stopping rule. 

Strong convergence is verified in the exact 
case if \Nc{$\gamma$,$\beta$}, $\beta \in \R$, $\gamma \in [0,1]$, holds and the functional is $\gamma$-balanced.  
With a stopping rule and if $\xs$ is the unique global minimum, then strong 
convergence in the noisy case is verified under the same conditions.

\bibliographystyle{siam}
\bibliography{main}

\begin{thebibliography}{10}

\bibitem{arrent}
{\sc K.~J. Arrow and A.~C. Enthoven}, {\em Quasi-concave programming},
  Econometrica, 29 (1961), pp.~779--800.

\bibitem{genco}
{\sc M.~Avriel, W.~E. Diewert, S.~Schaible, and I.~Zang}, {\em Generalized
  concavity}, vol.~63 of Classics in Applied Mathematics, Society for
  Industrial and Applied Mathematics (SIAM), Philadelphia, PA, 2010.

\bibitem{BaGo94}
{\sc A.~Bakushinsky and A.~Goncharsky}, {\em Ill-posed problems: theory and
  applications}, vol.~301 of Mathematics and its Applications, Kluwer Academic
  Publishers Group, Dordrecht, 1994.

\bibitem{BaSm06}
{\sc A.~Bakushinsky and A.~Smirnova}, {\em A posteriori stopping rule for
  regularized fixed point iterations}, Nonlinear Anal., 64 (2006),
  pp.~1255--1261.

\bibitem{BaSm07}
\leavevmode\vrule height 2pt depth -1.6pt width 23pt, {\em Iterative
  regularization and generalized discrepancy principle for monotone operator
  equations}, Numer. Funct. Anal. Optim., 28 (2007), pp.~13--25.

\bibitem{BaKuSm11}
{\sc A.~B. Bakushinsky, M.~Y. Kokurin, and A.~Smirnova}, {\em Iterative methods
  for ill-posed problems}, vol.~54 of Inverse and Ill-posed Problems Series,
  Walter de Gruyter GmbH \& Co. KG, Berlin, 2011.

\bibitem{Cal80}
{\sc A.-P. Calder{\'o}n}, {\em On an inverse boundary value problem}, in
  Seminar on {N}umerical {A}nalysis and its {A}pplications to {C}ontinuum
  {P}hysics ({R}io de {J}aneiro, 1980), Soc. Brasil. Mat., Rio de Janeiro,
  1980, pp.~65--73.

\bibitem{EHN96}
{\sc H.~W. Engl, M.~Hanke, and A.~Neubauer}, {\em Regularization of inverse
  problems}, vol.~375 of Mathematics and its Applications, Kluwer, Dordrecht,
  1996.

\bibitem{HaLeSc07}
{\sc M.~Haltmeier, A.~Leit{\~a}o, and O.~Scherzer}, {\em Kaczmarz methods for
  regularizing nonlinear ill-posed equations. {I}. {C}onvergence analysis},
  Inverse Probl. Imaging, 1 (2007), pp.~289--298.

\bibitem{Ha97}
{\sc M.~Hanke}, {\em A regularizing {L}evenberg-{M}arquardt scheme, with
  applications to inverse groundwater filtration problems}, Inverse Problems,
  13 (1997), pp.~79--95.

\bibitem{HaNeSc95}
{\sc M.~Hanke, A.~Neubauer, and O.~Scherzer}, {\em A convergence analysis of
  the {L}andweber iteration for nonlinear ill-posed problems}, Numer. Math., 72
  (1995), pp.~21--37.

\bibitem{HeKa10}
{\sc T.~Hein and K.~S. Kazimierski}, {\em Accelerated {L}andweber iteration in
  {B}anach spaces}, Inverse Problems, 26 (2010), pp.~055002, 17.

\bibitem{HoRa09}
{\sc N.~S. Hoang and A.~G. Ramm}, {\em Dynamical systems gradient method for
  solving nonlinear equations with monotone operators}, Acta Appl. Math., 106
  (2009), pp.~473--499.

\bibitem{HoRa10}
\leavevmode\vrule height 2pt depth -1.6pt width 23pt, {\em The dynamical
  systems method for solving nonlinear equations with monotone operators},
  Asian-Eur. J. Math., 3 (2010), pp.~57--105.

\bibitem{Jin11}
{\sc Q.~Jin}, {\em A general convergence analysis of some {N}ewton-type methods
  for nonlinear inverse problems}, SIAM J. Numer. Anal., 49 (2011),
  pp.~549--573.

\bibitem{Ka97}
{\sc B.~Kaltenbacher}, {\em Some {N}ewton-type methods for the regularization
  of nonlinear ill-posed problems}, Inverse Problems, 13 (1997), pp.~729--753.

\bibitem{KaNeSc08}
{\sc B.~Kaltenbacher, A.~Neubauer, and O.~Scherzer}, {\em Iterative
  regularization methods for nonlinear ill-posed problems}, vol.~6 of Radon
  Series on Computational and Applied Mathematics, Walter de Gruyter GmbH \&
  Co. KG, Berlin, 2008.

\bibitem{Ne00}
{\sc A.~Neubauer}, {\em On {L}andweber iteration for nonlinear ill-posed
  problems in {H}ilbert scales}, Numer. Math., 85 (2000), pp.~309--328.

\bibitem{Ne15}
\leavevmode\vrule height 2pt depth -1.6pt width 23pt, {\em Some generalizations
  for landweber iteration for nonlinear ill-posed problems}, Journal Inverse
  and Ill-posed Problems,  (2015).
\newblock in press.

\bibitem{PePiSe06}
{\sc S.~S. Pereverzyev, R.~Pinnau, and N.~Siedow}, {\em Regularized fixed-point
  iterations for nonlinear inverse problems}, Inverse Problems, 22 (2006),
  pp.~1--22.

\bibitem{Ra05}
{\sc A.~G. Ramm}, {\em Dynamical systems method for ill-posed equations with
  monotone operators}, Commun. Nonlinear Sci. Numer. Simul., 10 (2005),
  pp.~935--940.

\bibitem{Sch95}
{\sc O.~Scherzer}, {\em Convergence criteria of iterative methods based on
  {L}andweber iteration for solving nonlinear problems}, J. Math. Anal. Appl.,
  194 (1995), pp.~911--933.

\bibitem{ScKaHo12}
{\sc T.~Schuster, B.~Kaltenbacher, B.~Hofmann, and K.~S. Kazimierski}, {\em
  Regularization methods in {B}anach spaces}, vol.~10 of Radon Series on
  Computational and Applied Mathematics, Walter de Gruyter GmbH \& Co. KG,
  Berlin, 2012.

\bibitem{Va98}
{\sc V.~V. Vasin}, {\em Convergence of gradient-type methods for nonlinear
  equations}, Dokl. Akad. Nauk, 359 (1998), pp.~7--9.

\end{thebibliography}

\end{document}